\numberwithin{equation}{section}
\newtheorem{proposition}{Proposition}[section]
\newtheorem{theorem}[proposition]{Theorem}
\newtheorem{lemma}[proposition]{Lemma}
\newtheorem{corollary}[proposition]{Corollary}
\newtheorem{definition}[proposition]{Definition}
\renewenvironment{proof}{\smallskip\noindent\emph{\textbf{Proof.}}%
  \hspace{1pt}}{\hspace{-5pt}{\nobreak\quad\nobreak\hfill\nobreak%
    $\square$\vspace{2pt}\par}\smallskip\goodbreak}
\newenvironment{proofof}[1]{\smallskip\noindent{\textbf{Proof~of~#1.}}%
  \hspace{1pt}}{\hspace{-5pt}{\nobreak\quad\nobreak\hfill\nobreak%
    $\square$\vspace{2pt}\par}\smallskip\goodbreak}
\newcommand{\C}[1]{\mathbf{C}^{#1}}
\newcommand{\Cc}[1]{\mathbf{C}_c^{#1}}
\newcommand{\modulo}[1]{{\left|#1\right|}}
\newcommand{\norma}[1]{{\left\|#1\right\|}}
\newcommand{\reali}{{\mathbb{R}}}
\newcommand{\naturali}{{\mathbb{N}}}
\newcommand{\BV}{\mathbf{BV}}
\renewcommand{\epsilon}{\varepsilon}
\renewcommand{\phi}{\varphi}
\renewcommand{\L}[1]{{\mathbf{L}^#1}}
\newcommand{\W}[2]{{\mathbf{W}^{#1,#2}}}
\newcommand{\Lloc}[1]{{\mathbf{L}_{loc}^{#1}}}
\newcommand{\tv}{\mathinner{\rm TV}}
\newcommand{\caratt}[1]{{\chi_{\strut#1}}}
\renewcommand{\div}{\mathinner{\rm div}}
\newcommand{\pt}{\partial}
\newcommand{\conv}{\ast}
\newcommand{\convn}{\star}
\newcommand{\direct}{V}
\renewcommand{\d}[1]{\mathinner{\mathrm{d}{#1}}}
\newcommand{\D}{{\mathrm{D}}}
\renewcommand{\vec}[1]{\vv{#1}}
\begin{document}

\title{Nonlocal Crowd Dynamics Models\\ for Several Populations}

\author{Rinaldo M.~Colombo$^1$ \and Magali L\'ecureux-Mercier$^2$}

\footnotetext[1]{Department of Mathematics, Brescia University, Via
  Branze 38, 25133 Brescia, Italy}

\footnotetext[2]{Technion, Israel Institute of Technology, Amado Building,
 32000 Haifa, Israel}

\maketitle

\begin{abstract}
  \noindent This paper develops the basic analytical theory related to
  some recently introduced crowd dynamics models. Where well posedness
  was known only locally in time, it is here extended to all of
  $\reali^+$. The results on the stability with respect to the
  equations are improved. Moreover, here the case of several
  populations is considered, obtaining the well posedness of systems
  of multi-D non-local conservation laws. The basic analytical tools
  are provided by the classical Kru\v zkov theory of scalar
  conservation laws in several space dimensions.

  \medskip

  \noindent\textbf{Keywords:} Hyperbolic conservation laws, nonlocal
  flow, pedestrian traffic.

  \medskip

  \noindent\textbf{2010 MSC:} 35L65

\end{abstract}

\section{Introduction}
\label{sec:Intro}

\par From a macroscopic point of view, a crowd can be described
through its density $\rho$, with $\rho \in \L1 (\reali^2; \reali)$,
and assuming that $\rho$ satisfies a continuity equation of the form
\begin{equation}
  \label{eq:SCL}
  \partial_t \rho + \div (\rho \, \direct) = 0 \,.
\end{equation}
The vector $\direct$ is in general a function of the space coordinate
$x \in \reali^2$ and of the density $\rho$. The latter dependence may
well be also of functional type since, in general, it is realistic to
assume that $\direct$ depends on $\rho$ through some sort of weighted
space average of $\rho$.

\smallskip

A first example of a model of this kind was presented
in~\cite[Section~4]{ColomboHertyMercier}. There, it is assumed that
pedestrians follow prescribed paths but adjust their speeds to the
density they evaluate near to their positions. This amounts to
postulate a speed law of the form:
\begin{equation}
  \label{eq:Ped}
  \direct =  v( \rho \conv \eta) \, \vec v (x) \,.
\end{equation}
The integral curves of the vector field $\vec v$ are the trajectories
followed by the pedestrians. For instance, $\vec v (x)$ is the unit
tangent at $x$ to the geodesic curve joining $x$ to the destination of
the pedestrian at $x$. The convolution $\rho \conv \eta$ stands for
$\int_{\reali^2} \eta (x-\xi) \, \rho (t,\xi) \d\xi$, for a suitable
non-negative smooth kernel $\eta$. It represents the average density
measured, or felt, by the pedestrian at time $t$ in position $x$.

Below, we extend the results in~\cite{ColomboHertyMercier} proving the
global in time existence of the solutions
to~\eqref{eq:SCL}--\eqref{eq:Ped}. Moreover, we complete the stability
estimates with an estimate on the dependence of the solutions from $v$
and $\vec v$, see Theorem~\ref{thm:main1}. The resulting
model~\eqref{eq:SCL}--\eqref{eq:Ped} enjoys further remarkable
analytical properties. Indeed, a standard conservation law generates
a semigroup which is not differentiable with respect to the initial
data see~\cite[Section~1]{BressanGuerra} for an explicit example. On
the contrary, under suitable conditions, the semigroup generated
by~\eqref{eq:SCL}--\eqref{eq:Ped} turns out to be differentiable with
respect to the data. This allows to obtain, rigorously, necessary
conditions for optimality in various control problems based
on~\eqref{eq:SCL}--\eqref{eq:Ped}, see Section~\ref{sec:1} below
and~\cite[Section~4]{ColomboHertyMercier}.

\smallskip

Assuming that pedestrians adapt their path to the crowd density they
meet lead to the model presented
in~\cite{ColomboGaravelloMercier}. There, the speed law
\begin{equation}
  \label{eq:1}
  \direct
  =
  v (\rho)
  \left(
    \vec v (x)
    +
    \mathcal{I} (\rho)
  \right) \,.
\end{equation}
is considered. Again, $\vec v$ is the unit vector field describing the
preferred paths. But, contrary to~\eqref{eq:Ped}, here pedestrians may
deviate from it, due to the nonlocal term $\mathcal{I}$, which can be
assumed, for instance, of the form
\begin{equation}
  \label{eq:I}
  \mathcal{I} (\rho)
  =
  - \epsilon \,
  \frac{\nabla (\rho \conv \eta)}{\sqrt{1+\norma{\nabla (\rho\conv \eta)}^2}} \,.
\end{equation}
Again, $(\rho \conv \eta) (t,x)$ is the average density felt by the
pedestrian at $(t,x)$, so that~\eqref{eq:2} states that each
individual is ready to leave the preferred path in order to avoid
regions where the crowd density increases. The denominator
in~\eqref{eq:I} is a normalization factor, so that the modulus of
$\direct$ is essentially controlled by the function $v$
in~\eqref{eq:1}, a smooth non increasing function that vanishes at the
maximal density. We refer to~\cite{ColomboGaravelloMercier} for
further justifications of the choices leading
to~\eqref{eq:SCL}--\eqref{eq:1}. Below, we show through numerical
integrations that the formation of lanes, first noted
in~\cite{ColomboGaravelloMercier}, is present also in the present
multi-populations setting.

\smallskip

For both problems~\eqref{eq:SCL}--\eqref{eq:Ped}
and~\eqref{eq:SCL}--\eqref{eq:1}, we then consider the case of
several, say $n$, populations. By this, we mean that different groups
of pedestrians are considered, distinguished for instance by their
destination. This amounts to consider systems of the form
\begin{equation}
  \label{eq:SCLn}
  \partial_t \rho^i + \div (\rho^i \, \direct^i) = 0\,,
  \qquad i=1, \ldots, n
\end{equation}
where, in general, $\direct^i$ depends on the densities of all
populations: $\direct^i = \direct^i (\rho_1, \ldots, \rho_n)$. To
extend all the above well posedness and stability results, we rely
here essentially on~\cite{ColomboMercierRosini}, with the improvements
in~\cite{Lecureux}. We recall that systems of the form~\eqref{eq:SCLn}
are considered also in~\cite{CrippaMercier}, where measure theoretic
techniques are exploited. The interaction among different populations
is considered also in~\cite{DegondEtAl} through a macroscopic model
and in~\cite{CristianiPiccoliTosin} by means of a multiscale
model. For a general review about crowd dynamics models we refer
to~\cite{BellomoDogbe_review}.

\smallskip

The next section deals with the $n$-populations version
of~\eqref{eq:SCL}--\eqref{eq:Ped}. Then, Section~\ref{sec:2} is
devoted to the analogous extension of~\eqref{eq:SCL}--\eqref{eq:1},
presenting also a sample numerical integration. All proofs are
collected in Section~\ref{sec:TD}.

\smallskip

Throughout, we state and prove every result in $\reali^d$, for a
dimension $d \in \naturali$, $d > 0$, since the 2D case contains the
same difficulties as the general $d$-dimensional situation. By
$\reali^+$ we denote the interval $\left[0, +\infty \right[$.

\section{A Differentiable Model}
\label{sec:1}

The natural generalization of~\eqref{eq:Ped} to the case of $n$
populations is
\begin{equation}
  \label{eq:2}
  \direct^i
  =
  v^i( \rho^1 \conv \eta^1 + \ldots + \rho^n \conv \eta^n) \, {\vec{v}^i} (x)\,,
\end{equation}
so that $(\rho^i \conv \eta^i) (x)$ is an average of the values
attained by $\rho^i$ in $B(x,1)$. The map $v^i$ is the usual speed
law, typically required to be non increasing since at higher densities
the mean traffic speed is lower. Below, only the regularity of $v$ is
used. The vector $\vec{v}^i(x)$ is the direction of the pedestrian
belonging to the $i$-th population and situated at $x \in
\reali^2$. The presence of boundaries, obstacles or other geometric
constraints can be described through $\vec{v}^i$, see for
instance~\cite{ColomboFacchiMaterniniRosini}.

This section is devoted to the well posedness
of~\eqref{eq:SCLn}--\eqref{eq:2}, extending the results
in~\cite{ColomboHertyMercier} not only for what concerns the number of
populations, but also obtaining global in time existence and more
complete stability estimates. Before stating the main result, we
rigorously specify what we mean by \emph{solution}.

\begin{definition}
  \label{def:sol1}
  Let $T > 0$. Fix $\rho_o \in \L\infty (\reali^d; \reali^n)$. A weak
  entropy solution to~(\ref{eq:SCLn})--\eqref{eq:2} on $[0, T]$ is a
  bounded measurable map $\rho \in \C0 \left( [0, T]; \Lloc1(\reali^d;
    \reali^n) \right)$ whose $i$-th component $\rho^i$, for all $i=1,
  \ldots, n$, is a Kru\v zkov solution to the problem
  \begin{displaymath}
    \left\{
      \begin{array}{@{\,}l@{}}
        \partial_t \rho^i + \div \left( \rho^i \, \direct^i(t,x) \right) = 0
        \\
        \rho^i(0,x) = \rho_o^i(x)
      \end{array}
    \right.
    \quad \mbox{ where} \quad
    \direct^i(t,x) =
    v^i \!
    \left(
      \rho^1 (t) \conv \eta^1+ \ldots+
      \rho^n (t) \conv \eta^n
    \right)
    \, \vec{v}^i (x) \,.
  \end{displaymath}
\end{definition}

\noindent For the definition of Kru\v zkov solution,
see~\cite{Kruzkov} or~\cite[Paragraph~6.2]{DafermosBook}.  Above, the
convolution products in the arguments of $v^i$ are intended in the
sense
\begin{displaymath}
  \left(\rho^i (t) \conv \eta^i\right) (x)
  =
  \int_{\reali^d} \rho^i (t,\xi) \; \eta^i (x-\xi) \, \d\xi
  \qquad i=1, \ldots, n\,.
\end{displaymath}

The next Theorem summarizes various results
in~\cite{ColomboHertyMercier}, particularized
to~\eqref{eq:SCL}--\eqref{eq:Ped}.

\begin{theorem}
  \label{thm:main1}
  Fix $d,n \in \naturali$ with $d,n>0$.  Assume the following
  conditions:
  \begin{description}
  \item[($\boldsymbol{v}$)] $v^i \in (\C2 \cap \W2\infty)(\reali;
    \reali)$ for $i=1, \ldots, n$.
  \item[($\boldsymbol{\vec{v}}$)] ${\vec{v}^i} \in (\C2 \cap \W{2}{1})
    (\reali^d; \mathbb{S}^1)$ for $i=1, \ldots, n$.
  \item[($\boldsymbol{\eta}$)] $\eta^i \in (\C2 \cap \W2\infty)
    (\reali^d; [0,1])$ and $\norma{\eta^i}_{\L1} = 1$ for $i=1,
    \ldots, n$.
  \end{description}
  Then, there exists a semigroup $S \colon \reali^+ \times (\L1 \cap
  \L\infty \cap \BV) (\reali^d; \reali^n) \to (\L1 \cap \L\infty
  \cap \BV) (\reali^d; \reali^n)$ such that:
  \begin{enumerate}
  \item For all $\rho_o \in (\L1 \cap \L\infty \cap \BV) (\reali^d;
    \reali^n)$, for all $t\geq 0$, the orbit $t \mapsto S_t \rho_o$ is the
    unique solution to~\eqref{eq:SCL}--\eqref{eq:2} in the sense of
    Definition~\ref{def:sol1} with initial datum $\rho_o$.
    Furthermore, the map $t \mapsto S_t \rho_o$ is in $\C0 \left(
      \reali^+; \L1 (\reali^d; \reali^n) \right)$.
  \item For all $\rho_o \in (\L1 \cap \L\infty \cap \BV) (\reali^d;
    \reali^n)$, if $\rho_o^i \geq 0$ for $i = 1 \ldots, n$, then
    $(S_t\rho_o)_i \geq 0$ for all $t >0$.
  \item There exists a constant $\mathcal{L}$ such that for all
    $\rho_o \in (\L1 \cap \L\infty \cap \BV) (\reali^d; \reali^n)$,
    the corresponding solution satisfies for all $t \in \reali^+$
    \begin{displaymath}
      \tv\left(\rho(t) \right)
      \leq
      \left(
        \tv(\rho_o)
        +
        \mathcal{L} \, t \, \norma{\rho_o}_{\L\infty}
      \right) e^{\mathcal{L}  t}
      \quad \mbox{ and } \quad
      \norma{\rho(t)}_{\L\infty}
      \leq
      \norma{\rho_o}_{\L\infty} \, e^{\mathcal{L}  t} \,.
    \end{displaymath}
  \item Fix a positive $M$. Then there exists functions $L,
    \mathcal{A}_\eta, \mathcal{A}_v, \mathcal{A}_{\vec v} \in \C0
    (\reali^+; \reali^+)$ such for all $\rho_{o,1}, \rho_{o,2}$ in
    $\L1 (\reali^d; \reali^n)$ with
    $\max\left\{\norma{\rho_{o,i}}_{\L1} ,\,
      \norma{\rho_{o,i}}_{\L\infty} ,\, \tv (\rho_{o,i})\right\} \leq
    M$, for all $v_1, v_2$ satisfying $\boldsymbol{(v)}$, for all
    $\vec v_1, \vec v_2$ satisfying $\boldsymbol{(\vec v)}$ and for
    all $\eta_1, \eta_2$ satisfying $(\boldsymbol{\eta})$, the
    corresponding solutions $\rho_1, \rho_2$ satisfy, for all $t \in
    \reali^+$,
    \begin{eqnarray*}
      \norma{\rho_1(t) - \rho_2(t)}_{\L1}
      & \leq &
      \left(1+t L(t) \right) \,
      \norma{\rho_{o,1} - \rho_{o,2}}_{\L1}
      \\
      & &
      +
      t \, \mathcal{A}_\eta(t) \norma{\eta_1-\eta_2}_{\W1\infty}
      +
      t \, \mathcal{A}_{v}(t) \norma{v_1-v_2}_{\W1\infty}
      \\
      & &
      +
      t \, \mathcal{A}_{\vec v}(t)
      \left(
        \norma{\vec v_1-\vec v_2}_{\L\infty}
        +
        \norma{\vec v_1-\vec v_2}_{\W11}
      \right)\,.
    \end{eqnarray*}
  \item More regular initial data imply more regular solutions, in the
    sense that
    \begin{displaymath}
      \begin{array}{r@{\;}c@{\;}lcr@{\;}c@{\;}l@{\quad}r@{\;}c@{\;}l}
        \rho_o & \in & (\W11 \cap \L\infty)(\reali^d;\reali^n)
        & \Longrightarrow &
        \forall t & \in & \reali^+,
        & \rho(t) & \in & \W11(\reali^d;\reali^n)\, ,
        \\
        \rho_o & \in & \W1\infty (\reali^d;\reali^n)
        & \Longrightarrow &
        \forall t & \in & \reali^+,
        & \rho(t) & \in & \W1\infty(\reali^d;\reali^n) \,.
      \end{array}
    \end{displaymath}
    Furthermore, there exists a positive constant $C$ such that
    \begin{displaymath}
      \norma{\rho(t)}_{\W11}
      \leq
      (1+Ct) e^{Ct} \, \norma{\rho_o}_{\W11}
      \quad \mbox{ and } \quad
      \norma{\rho(t)}_{\W1\infty}
      \leq
      (1+Ct) e^{Ct} \, \norma{\rho_o}_{\W1\infty} \,.
    \end{displaymath}
  \item If $v$, $\vec v$ and $\eta$ are of class $\C3$, then
    \begin{displaymath}
      \begin{array}{r@{\;}c@{\;}lcr@{\;}c@{\;}l@{\quad}r@{\;}c@{\;}l}
        \rho_o & \in & (\W21 \cap \L\infty)(\reali^d; \reali^n)
        & \Longrightarrow &
        \forall t & \in & \reali^+,
        &
        \rho(t) & \in & \W21(\reali^d; \reali^n)\,,
        \\
        \rho_o & \in & \W2\infty(\reali^d; \reali^n)
        & \Longrightarrow &
        \forall t & \in & \reali^+,
        &
        \rho(t) & \in & \W2\infty(\reali^d; \reali^n)\,,
      \end{array}
    \end{displaymath}
    and for a suitable non--negative constant $C$, we have the
    estimates
    \begin{displaymath}
      \norma{\rho(t)}_{\W21}
      \leq
      e^{Ct}(1+Ct)^2 \, \norma{\rho_o}_{\W21} \,,
      \qquad
      \norma{\rho(t)}_{\W2\infty}
      \leq
      e^{Ct}(1+Ct)^2 \, \norma{\rho_o}_{\W2\infty} \,.
    \end{displaymath}
  \item If $v$ is of class $\C4$, for any initial data $\rho_o \in
    (\W2\infty \cap \W21 ) (\reali^d; \reali^n)$, $\sigma_o \in (\W11
    \cap \L\infty) (\reali^d; \reali^n)$ and for all time $t \in
    \reali^+$ the semigroup $S$ is strongly $\L1$ G\^{a}teaux
    differentiable in the direction $\sigma_o$. The derivative $\D
    S_t(\rho_o)(\sigma_o)$ of $S_t$ at $\rho_o$ in the direction
    $\sigma_o$ is
    \begin{displaymath}
      \D S_t(\rho_o)(\sigma_o) = \Sigma_t^{\rho_o}(\sigma_o) \, ,
    \end{displaymath}
    where $\Sigma^{\rho_o}$ is the linear semigroup generated by the
    Kru\v{z}kov solution to
    \begin{equation}
      \label{eq:linear}
      \left\{
        \begin{array}{l@{\qquad}r@{\,}c@{\,}l}
          \partial_t \sigma^i
          +
          \div \left(
            \sigma^i \direct^i(\rho)
            +
            \rho^i \,  \mathrm{D}\direct^i(\rho)(\sigma)
          \right)
          =
          0
          & (t,x) & \in & I \times \reali^d
          \\
          \sigma^i(0,x) = \sigma_o^i(x)
          & x & \in & \reali^d
        \end{array}
      \right.
    \end{equation}
    where $\rho (t) = S_t \rho_o$ for all $t\in \reali^+$ and
    $\direct^i$ is defined in~\eqref{eq:2}.
  \end{enumerate}
\end{theorem}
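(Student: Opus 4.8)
The plan is to run the fixed-point/Kru\v zkov-stability scheme of~\cite{ColomboHertyMercier}, now carried out componentwise and pushed to all positive times, the new inputs being the $n$-population coupling and a uniform-in-time control that rules out finite-time blow-up. Fix $T>0$ and, on the set $\mathcal X$ of maps $r\in\C0([0,T];(\L1\cap\L\infty\cap\BV)(\reali^d;\reali^n))$ with the $\C0([0,T];\L1)$ distance, define $\mathcal T$ by: given $r\in\mathcal X$, freeze the nonlocal arguments by setting $w^i(t,x):=v^i\!\left(\sum_{j=1}^n(r^j(t)\conv\eta^j)(x)\right)\vec v^i(x)$, now a vector field of $(t,x)$ alone, and let $(\mathcal Tr)^i=\sigma^i$ be the Kru\v zkov solution of $\pt_t\sigma^i+\div(\sigma^i\,w^i(t,x))=0$ with datum $\rho_o^i$. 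A fixed point of $\mathcal T$ is exactly a solution in the sense of Definition~\ref{def:sol1}. The structural observation used throughout is that convolution converts integrability of $r$ into regularity of $w^i$: since $\modulo{\vec v^i}\equiv1$, $\nabla_x(r^j\conv\eta^j)=r^j\conv\nabla\eta^j$, and, by $\boldsymbol{(v)}$, $\boldsymbol{(\vec v)}$, $\boldsymbol{(\eta)}$, the field $w^i$ satisfies $\norma{w^i}_{\L\infty}\le\norma{v^i}_{\L\infty}$ while $w^i$ and its first two space derivatives are controlled in terms of $\norma{r}_{\L1}$ and the fixed structural norms of $v^i,\vec v^i,\eta^i$.

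Because $f^i(t,x,u)=u\,w^i(t,x)$ is \emph{linear} in $u$, the component problems undergo no nonlinear shock formation, and the sharp dependence-on-the-flux estimates of~\cite{ColomboMercierRosini}, with the refinements of~\cite{Lecureux}, take a particularly favourable form: $\norma{\sigma^i(t)}_{\L1}=\norma{\rho_o^i}_{\L1}$ (the positive and negative parts of $\sigma^i$ are transported by the flow of $w^i$ without ever mixing), positivity is preserved, $\norma{\sigma^i(t)}_{\L\infty}\le\norma{\rho_o^i}_{\L\infty}e^{\mathcal{L} t}$ and $\tv(\sigma^i(t))\le(\tv(\rho_o^i)+\mathcal{L}\,t\,\norma{\rho_o^i}_{\L\infty})e^{\mathcal{L} t}$ with $\mathcal{L}$ controlled by $\norma{w^i}_{\W2\infty}$, and $\norma{\sigma_1^i(t)-\sigma_2^i(t)}_{\L1}$ is bounded by $\norma{\rho_{o,1}^i-\rho_{o,2}^i}_{\L1}$ plus the time-integral of the appropriate norms of $w_1^i-w_2^i$ and of $\div_x(w_1^i-w_2^i)$. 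The hypotheses $\boldsymbol{(v)},\boldsymbol{(\vec v)},\boldsymbol{(\eta)}$ are precisely what bound those norms by $\O\norma{r_1-r_2}_{\C0([0,T];\L1)}$, so $\mathcal T$ maps the closed ball of $\mathcal X$ cut out by the invariant $\L\infty$ and $\BV$ bounds into itself and, for $T$ small, is a contraction there. Banach's theorem gives a solution on $[0,T]$; uniqueness among all solutions is the same $\L1$-Lipschitz estimate applied to two solutions, $\C0$ dependence of $\rho$ on $t$ in $\L1$ and the semigroup property follow from uniqueness, and positivity is inherited componentwise. The construction is \emph{global} because $\norma{\rho(t)}_{\L1}=\norma{\rho_o}_{\L1}$ is conserved, whence $\mathcal{L}$ and the contraction constants never blow up, $T$ may be chosen uniform on bounded time intervals, and a continuation argument extends the solution to all of $\reali^+$; this settles items~1--3. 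Item~4 follows by rerunning the chain of estimates while also varying $v,\vec v,\eta$: the relevant norms of $w_1^i-w_2^i$ and $\div_x(w_1^i-w_2^i)$ are then bounded, by a constant depending only on $M$, times $\norma{\rho_1-\rho_2}_{\L1}+\norma{\eta_1-\eta_2}_{\W1\infty}+\norma{v_1-v_2}_{\W1\infty}+\norma{\vec v_1-\vec v_2}_{\L\infty}+\norma{\vec v_1-\vec v_2}_{\W11}$ (the two $\vec v$-norms reflecting that $\vec v$ enters the flux through its size and its divergence through $\L1$), and integrating the resulting Gr\"onwall inequality produces exactly the four terms of the statement.

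For items~5 and~6 I would differentiate the frozen (linear) transport equation $\pt_t\rho^i+w^i\cdot\nabla\rho^i+\rho^i\div w^i=0$ in the space variables: $\pt_k\rho^i$ then solves $\pt_t(\pt_k\rho^i)+\div(w^i\,\pt_k\rho^i)=-\,\pt_k w^i\cdot\nabla\rho^i-\rho^i\,\pt_k\div w^i$, whose source has $\L1$, resp.\ $\L\infty$, norm bounded by $\O\norma{\rho(t)}_{\W11}$, resp.\ $\O\norma{\rho(t)}_{\W1\infty}$, because $\pt_k w^i$ and $\pt_k\div w^i$ are controlled by the second space derivatives of $\eta^i$ together with the hypotheses on $v^i,\vec v^i$; a Gr\"onwall argument with the time-uniform constants then gives the $\W11$ and $\W1\infty$ bounds of item~5. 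Differentiating once more — legitimate once $v,\vec v,\eta$ are of class $\C3$, so that the third space derivatives of $w^i$ are controlled — yields item~6. As usual, these computations are first carried out along the smooth iterates of $\mathcal T$ and then passed to the limit.

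Finally, for item~7 I would first solve the linear system~\eqref{eq:linear}. With $\rho=S_\bullet\rho_o$ fixed, \eqref{eq:linear} is again a decoupled family of scalar conservation laws, linear in $\sigma$, with flux $\sigma^i\direct^i(\rho)+\rho^i\,\D\direct^i(\rho)(\sigma)$; items~5--6, applicable since $\rho_o\in\W2\infty\cap\W21$, make this flux regular enough for the $\W11$/$\W1\infty$ theory, the hypothesis $v\in\C4$ being what keeps $\D\direct^i(\rho)$, which carries a factor $v'$, sufficiently smooth, and the same fixed-point construction then produces the linear semigroup $\Sigma^{\rho_o}$. It remains to show that, for $\sigma_o\in\W11\cap\L\infty$, the quantity $z^h(t):=\bigl(S_t(\rho_o+h\sigma_o)-S_t\rho_o-h\,\Sigma_t^{\rho_o}\sigma_o\bigr)/h$ tends to $0$ in $\L1$ as $h\to0$. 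Subtracting the three conservation laws and Taylor-expanding each $v^i$ to second order, $z^h$ satisfies a scalar conservation law whose flux differs from $\sigma^i\,w^i$ by a perturbation that is $\O\,h$ in the norms entering the $\L1$-Lipschitz estimate, on every bounded time interval; it is here that $v^{(2)},v^{(3)},v^{(4)}$ appear, the last two being needed to bound the space derivatives of the quadratic Taylor remainder via items~5--6 applied to both $S_\bullet\rho_o$ and $S_\bullet(\rho_o+h\sigma_o)$. The $\L1$-Lipschitz estimate then gives $\norma{z^h(t)}_{\L1}\le\O\,h\to0$, so $\D S_t(\rho_o)(\sigma_o)=\Sigma_t^{\rho_o}\sigma_o$. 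I expect this last step to be the main obstacle: extracting a clean $\O\,h$ bound on the remainder flux requires the time-uniform higher Sobolev bounds of items~5--6 for two nearby solutions simultaneously, together with the full strength of the flux-dependence estimates of~\cite{ColomboMercierRosini,Lecureux}; everything else is bookkeeping of constants, all of which stay finite on bounded time intervals precisely because the conserved $\L1$ mass of $\rho$ is converted by the convolution into uniform control of the nonlocal speeds.
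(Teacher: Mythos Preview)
Your proposal is correct and follows essentially the same route as the paper: a Banach fixed-point argument on the ball in $\C0([0,T];\L1)$ determined by the conserved $\L1$ mass, componentwise application of the Kru\v zkov theory and the stability estimates from~\cite{ColomboMercierRosini,Lecureux}, globalization via the observation that all constants governing the contraction depend on $\norma{\rho_o}_{\L1}$ rather than on norms that grow, and the same fixed-point plus remainder-estimate scheme for the linearized problem in item~7. The only noticeable methodological difference is in items~5--6: you propose to differentiate the transport equation and run a Gr\"onwall argument on $\norma{\nabla\rho(t)}$, whereas the paper instead invokes the explicit characteristics representation $\rho^i(t,x)=\rho_o^i\!\left(X(0;t,x)\right)J(0;t,x)$ from~\cite[Lemma~5.1]{ColomboHertyMercier} and differentiates that formula directly---for linear transport the two approaches are equivalent, the representation formula being slightly cleaner in that it sidesteps the need to justify differentiating a weak solution.
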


\noindent Note that the linear problem~\eqref{eq:linear} is the
equation that is obtained through a merely formal linearization
of~\eqref{eq:SCLn}--\eqref{eq:2}.

The above regularity results allow to state and prove the following
necessary condition for optimality.

\begin{proposition}
  \label{prop:optimal}
  Let $f\in \C{1,1}(\reali^n; \reali^+)$, $\psi \in \L\infty(\reali^+
  \times \reali^d; \reali^+)$ and assume that the
  problem~(\ref{eq:SCL})--(\ref{eq:2}) satisfies the assumptions
  at~6.~in Theorem~\ref{thm:main1}. Denote by $S \colon \reali^+
  \times (\L1 \cap \L\infty) (\reali^d; \reali^n) \to (\L1 \cap
  \L\infty) (\reali^d; \reali^n)$ the semigroup generated
  by~(\ref{eq:SCLn})--(\ref{eq:2}). Introduce the integral cost
  functional
  \begin{equation}
    \label{eq:J}
    J (\rho_{o} )
    =
    \int_{\reali^d}
    f \left( S_t \rho_o \right) \, \psi(t,x)
    \d{x} \,.
  \end{equation}
  Then, $J$ is strongly $\L\infty$ G\^ateaux differentiable in any
  direction $\sigma_o \in (\W11 \cap \L\infty) (\reali^d; \reali^n)$.

  Moreover, let $\Sigma \colon \reali^+ \times (\W11 \cap \L\infty)
  (\reali^d; \reali^n) \to (\W11 \cap \L\infty) (\reali^d; \reali^n)$
  be the linear semigroup generated by~(\ref{eq:linear}). Then,
  \begin{displaymath}
    DJ(\rho_o)(\sigma_o)
    =
    \int_{\reali^d}
    f'(S_t\rho_o) \, \Sigma_t^{\rho_o} (\sigma_o) \psi(t,x) \, \d{x} \, .
  \end{displaymath}

  If $\rho_\star \in (\L1 \cap \L\infty) (\reali^d; \reali^n)$ solves
  the problem
  \begin{displaymath}
    \textrm{find}\quad
    \rho_\star \in (\L1 \cap \L\infty) (\reali^d; \reali^n)
    \quad \mbox{ such that }\quad
    J (\rho_\star)
    =
    \min_{\rho_{o} \in (\L1 \cap \L\infty) (\reali^d; \reali^n)} J( \rho  )
  \end{displaymath}
  then, for all $\sigma_o \in (\L1 \cap \L\infty) (\reali^d;
  \reali^n)$,
  \begin{equation}
    \label{eq:last}
    \int_{\reali^d} f'(S_t \rho_\star) \, \Sigma_t^{\rho_\star} \sigma_o \, \psi(t,x)
    \, \d{x}
    =
    0 \,.
  \end{equation}
\end{proposition}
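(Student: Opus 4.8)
The plan is to reduce everything to the differentiability statement already contained in part~8 of Theorem~\ref{thm:main1}, using the chain rule and a dominated convergence argument. The cost functional~\eqref{eq:J} is a composition of the affine-in-$\sigma_o$ map $\rho_o \mapsto S_t\rho_o$ (whose G\^ateaux derivative is the linear semigroup $\Sigma^{\rho_o}$, by part~8), the smooth superposition operator $u \mapsto f\circ u$, integration against the fixed weight $\psi$, and integration in $x$. First I would fix $\rho_o \in (\W2\infty\cap\W21)(\reali^d;\reali^n)$ and $\sigma_o\in(\W11\cap\L\infty)(\reali^d;\reali^n)$, and for $h>0$ write the difference quotient
\begin{displaymath}
  \frac{J(\rho_o+h\sigma_o)-J(\rho_o)}{h}
  =
  \int_{\reali^d}
  \frac{f(S_t(\rho_o+h\sigma_o)) - f(S_t\rho_o)}{h}\,\psi(t,x)\,\d{x}\,,
\end{displaymath}
and split the integrand, via a first-order Taylor expansion of $f$ with integral remainder, into the principal term $f'(S_t\rho_o)\cdot\bigl(S_t(\rho_o+h\sigma_o)-S_t\rho_o\bigr)/h$ plus a quadratic remainder controlled by $\|f\|_{\C{1,1}}$ times $\bigl|S_t(\rho_o+h\sigma_o)-S_t\rho_o\bigr|^2/h$.

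\smallskip

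Next I would pass to the limit $h\to 0^+$ in each piece. For the remainder, the Lipschitz-in-data estimate in part~4 of Theorem~\ref{thm:main1} gives $\|S_t(\rho_o+h\sigma_o)-S_t\rho_o\|_{\L1}\le(1+tL(t))\,h\,\|\sigma_o\|_{\L1}$, while the $\L\infty$ bound in part~3 keeps all these solutions uniformly bounded; hence the quadratic remainder is $O(h)$ in $\L1_x$ and, multiplied by the bounded weight $\psi$, vanishes. For the principal term, part~8 asserts precisely that $\bigl(S_t(\rho_o+h\sigma_o)-S_t\rho_o\bigr)/h \to \Sigma_t^{\rho_o}(\sigma_o)$ strongly in $\L1$ as $h\to0$; since $f'(S_t\rho_o)$ is bounded (as $f\in\C{1,1}$ and $S_t\rho_o\in\L\infty$) and $\psi(t,\cdot)\in\L\infty$, the product against this strongly-$\L1$-convergent sequence converges, giving
\begin{displaymath}
  DJ(\rho_o)(\sigma_o)
  =
  \int_{\reali^d} f'(S_t\rho_o)\,\Sigma_t^{\rho_o}(\sigma_o)\,\psi(t,x)\,\d{x}\,.
\end{displaymath}
The same computation with $h<0$ gives the two-sided limit; linearity of $\sigma_o\mapsto\Sigma_t^{\rho_o}(\sigma_o)$ (it is the linear semigroup of~\eqref{eq:linear}) shows the derivative is linear in the direction, and the strong $\L\infty$ G\^ateaux differentiability claimed in the statement follows because all the convergences above are uniform over the compact time interval and because $\psi\in\L\infty$, so the map $x\mapsto DJ(\rho_o)(\sigma_o)(x)$ is the $\L\infty$-limit of the difference quotients. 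The final first-order optimality condition~\eqref{eq:last} is then immediate: if $\rho_\star$ minimizes $J$ over the linear space $(\L1\cap\L\infty)(\reali^d;\reali^n)$, then $t\mapsto J(\rho_\star+t\sigma_o)$ has a minimum at $t=0$ for every admissible direction $\sigma_o$, so its derivative $DJ(\rho_\star)(\sigma_o)$ must vanish.

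\smallskip

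The main obstacle is a regularity-matching issue between the hypotheses of part~8 and the directions allowed in the optimality statement. Part~8 requires the base point $\rho_o$ to lie in $(\W2\infty\cap\W21)$ and the direction in $(\W11\cap\L\infty)$, whereas Proposition~\ref{prop:optimal} states differentiability of $J$ for any $\sigma_o\in(\W11\cap\L\infty)$ but a priori only assumes $\rho_o\in(\L1\cap\L\infty)$, and the optimality condition~\eqref{eq:last} is quantified over all $\sigma_o\in(\L1\cap\L\infty)$. I would handle this by first proving everything under the stronger regularity, then extending by density: the $\L1$-Lipschitz continuity of $S_t$ in the data (part~4) together with the fact that $f'$ and $\psi$ are bounded makes $J$ itself $\L1$-Lipschitz on bounded sets, so $DJ(\rho_o)(\cdot)$ extends to a bounded linear functional on the $\L1$-closure of $(\W11\cap\L\infty)$, and since a minimizer $\rho_\star\in(\L1\cap\L\infty)$ can be approximated in $\L1$ by elements of $(\W2\infty\cap\W21)$ the stationarity~\eqref{eq:last} passes to the limit. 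One should be slightly careful that the weight $\psi$ depends on both $t$ and $x$ and only belongs to $\L\infty$, not $\L1$; this is harmless for the $\L\infty$-in-$x$ statements but means the $x$-integral defining $J$ in~\eqref{eq:J} is implicitly over a bounded region, or $f(0)$ is normalized so that $f(S_t\rho_o)$ is integrable — a point to clarify but not a genuine difficulty.
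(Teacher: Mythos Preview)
The paper does not prove this proposition at all: immediately after the statement it says the result ``directly follows from~[ColomboHertyMercier, Proposition~2.12 and Theorem~4.2]'' and omits the argument. Your route --- form the difference quotient, Taylor-expand $f$, invoke the G\^ateaux differentiability of $S_t$ from Theorem~\ref{thm:main1}, and pass to the limit --- is the standard one and is almost certainly what the cited reference does, so in spirit you are aligned with the paper.

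Two points need tightening. First, the differentiability of $S_t$ you are using is item~7 of Theorem~\ref{thm:main1}, not item~8 (there is no item~8). Second, your remainder bound is too optimistic as written. You claim the quadratic term $\modulo{S_t(\rho_o+h\sigma_o)-S_t\rho_o}^2/h$ is $O(h)$ in $\L1$, but the estimates you invoke give only $\norma{S_t(\rho_o+h\sigma_o)-S_t\rho_o}_{\L1}=O(h)$ from item~4 and $\norma{S_t(\rho_o+h\sigma_o)-S_t\rho_o}_{\L\infty}=O(1)$ from item~3; hence
\begin{displaymath}
  \frac{1}{h}\int_{\reali^d}\modulo{S_t(\rho_o+h\sigma_o)-S_t\rho_o}^2\d{x}
  \;\le\;
  \frac{1}{h}\,\norma{\cdot}_{\L\infty}\,\norma{\cdot}_{\L1}
  \;=\;O(1),
\end{displaymath}
which does not vanish. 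What you actually need is $\norma{S_t(\rho_o+h\sigma_o)-S_t\rho_o}_{\L\infty}\to 0$. Under the regularity hypotheses of item~6 (so that item~5 applies and gives a uniform $\W1\infty$ bound on both solutions), Gagliardo--Nirenberg interpolation $\norma{u}_{\L\infty}\le C\,\norma{\nabla u}_{\L\infty}^{d/(d+1)}\norma{u}_{\L1}^{1/(d+1)}$ yields $\norma{S_t(\rho_o+h\sigma_o)-S_t\rho_o}_{\L\infty}=O(h^{1/(d+1)})$, and then the remainder is $O(h^{1/(d+1)})\to 0$. With this correction your argument is complete; the density discussion you give for relaxing the regularity of $\rho_\star$ and $\sigma_o$ is reasonable and is indeed the sort of thing one expects the cited reference to do.
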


\noindent The proof directly follows from~\cite[Proposition~2.12 and
Theorem~4.2]{ColomboHertyMercier} and is hence omitted.

Remark that~\eqref{eq:SCLn}--\eqref{eq:2}provides an environment where
optimal control problems can be considered. On the other hand, no
uniform upper bound in $\L\infty$ is available.

\section{Pattern Formation}
\label{sec:2}

In the case of $n$ populations trying to avoid each other, we are led
to consider~\eqref{eq:SCLn} with
\begin{equation}
  \label{eq:Second}
  \direct^i
  =
  v^i (\rho^i) \,
  \left(
    {\vec{v}^i} (x)
    +
    \mathcal{I}^i (\rho^1, \ldots, \rho^n)
  \right)
  \quad \mbox{ for } \quad
  i=1, \ldots, n \,,
\end{equation}
where $\mathcal{I}^1, \ldots, \mathcal{I}^n$ are suitable nonlocal
functionals. According to~\eqref{eq:Second}, the velocity $\direct^i$
of the $i$-th population is the product of a scalar \emph{crowding
  factor} $v^i(\rho^i)$ with a vector ${\vec{v}^i}(x) +
\mathcal{I}^i(\rho^1, \ldots, \rho^n)$, which is the sum of a
\emph{preferred direction} ${\vec{v}^i} (x)$ and a \emph{deviation}
$\mathcal{I}^i (\rho^1, \ldots, \rho^n)$. The scalar $v^i(\rho^i)$
approximately gives the modulus of the speed. A further standard
condition on $v^i$ typically required in the engineering literature is
that $v^i$ be weakly decreasing. However, this assumption is here not
exploited.  The unit vector field ${\vec{v}^i}$ can be, for instance,
the vector tangent at $x$ to the geodesic that the individuals in the
$i$--th population would follow to get to their destination, if
unaffected by any other individual. The term $\mathcal{I}^i(\rho^1,
\ldots, \rho^n)$ describes how the $i$-th population deviates from its
preferred trajectory due to the interaction among individuals, both of
the same and of different populations. In general, it is a nonlocal
functional, since its value at any position $x$ depends on the
population densities averaged over a neighborhood of $x$. The present
setting generalizes the model in~\cite{ColomboGaravelloMercier}.

Below, we first address the main analytical properties
of~\eqref{eq:SCLn}--\eqref{eq:Second}, such as the existence of
solutions, their continuous dependence from the initial data and their
stability with respect to $\vec{v}^i$ and $\mathcal{I}^i$. Then, a
numerical integrations shows further qualitative properties of the
solutions to~\eqref{eq:SCLn}--\eqref{eq:Second}.

Denote by $R > 0$ a given maximal density.  Throughout, we also denote
the flux of the $i$-th population by $q^i(\rho^i) = \rho^i \,
v^i(\rho^i)$, for all $\rho^i \in [0, R]$.

Our starting point is the rigorous definition of solution
to~\eqref{eq:SCLn}--\eqref{eq:Second}, analogous to
Definition~\ref{def:sol1}.

\begin{definition}
  \label{def:sol2}
  Fix the initial datum $(\rho^1_{o}, \ldots, \rho^n_{o}) \in (\L1
  \cap \L\infty) (\reali^d; [0,R]^n)$.  A map $\rho \in \C0 \left(
    [0,T]; \L1(\reali^d; [0,R]^n) \right)$ is a \emph{weak entropy
    solution} to~\eqref{eq:SCLn}--\eqref{eq:Second} corresponding to
  the initial condition $(\rho^1_{o}, \ldots, \rho^n_{o})$ if, for $i
  = 1, \ldots, n$, $\rho^i$ is a Kru\v zkov solution to the Cauchy
  problem for the scalar conservation law
  \begin{displaymath}
    \left\{
      \begin{array}{l@{}}
        \partial_t \rho^i
        +
        \div \left( \rho^i\, v^i (\rho^i) \, \direct^i(t,x) \right) = 0
        \\
        \rho^i(0,x) = \rho_o^i (x)
      \end{array}
    \right.
    \quad \mbox{ where } \quad
    \direct^i(t,x)
    =
    {\vec{v}^i} (x) + \mathcal{I}^i \left( \rho_1(t), \ldots, \rho_n(t) \right) (x) \,.
  \end{displaymath}
\end{definition}

For the definition of Kru\v zkov solution we refer
to~\cite[Definition~1]{Kruzkov}.

\begin{theorem}
  \label{thm:main2}
  Assume the following conditions:
  \begin{description}
  \item[(v)] For $i=1, \ldots, n$, $v^i\in \C2([0,R]; \reali^+)$
    satisfies $v^i(R) = 0$.
  \item[($\boldsymbol{\vec v}$)] For $i = 1, \ldots, n$,
    ${\vec{v}^i}\in (\C2 \cap \W1\infty) (\reali^d; \reali^d)$ and
    $\div {\vec{v}^i} \in \W11 (\reali^d; \reali^{d\times d})$.
  \item[(I)] There exists a constant $C_I > 0$ such that the functional
    $\mathcal{I}^i \colon \L1(\reali^d; [0,R]^n) \to \C2 (\reali^d;
    \reali^d)$ satisfies, for $i = 1, \ldots, n$,
    \begin{displaymath}
      \begin{array}{@{}cl@{}}
        \forall \rho \in \L1(\reali^d; [0,R]^n)
        & \left\{
          \begin{array}{@{}rcl@{}}
            \norma{\nabla\mathcal{I}^i(\rho)}_{\L\infty(\reali^d; \reali^d)}
            & \leq &
            C_I \, \norma{\rho}_{\L1(\reali^d; \reali^n)} ,
            \\[5pt]
            \norma{\nabla \div \left(\mathcal{I}^i (\rho)\right)}_{\L1(\reali^d;
              \reali^{d\times d})}
            & \leq &
            C_I \, \norma{\rho}_{\L1(\reali^d; \reali^n)} .
          \end{array}
        \right.
        \\[15pt]
        \forall \rho, \rho' \in \L1(\reali^d; [0,R]^n)
        & \left\{
          \begin{array}{@{}rcl@{}}
            \norma{
              \mathcal{I}^i(\rho)
              -
              \mathcal{I}^i(\rho')}_{\L\infty(\reali^d; \reali^d)}
            & \leq &
            C_I \, \norma{\rho-\rho'}_{\L1(\reali^d; \reali^n)} ,
            \\[5pt]
            \norma{
              \div
              \left(
                \mathcal{I}^i(\rho)
                -
                \mathcal{I}^i(\rho') \right)}_{\L1(\reali^d;
              \reali)}
            & \leq &
            C_I \, \norma{\rho-\rho'}_{\L1(\reali^d; \reali^n)} .
          \end{array}
        \right.
      \end{array}
    \end{displaymath}
  \end{description}
  Then, there exists a semigroup $S \colon \reali^+ \times (\L1 \cap
  \BV) (\reali^d; [0,R]^n) \to (\L1 \cap \BV) (\reali^d; [0,R]^n)$
  such that
  \begin{enumerate}
  \item For all $\rho_o \in (\L1 \cap \BV) (\reali^d; [0,R]^n)$, the
    orbit $t \mapsto S_t \rho_o$ is the unique solution
    to~\eqref{eq:SCL}--\eqref{eq:Second} with initial datum $\rho_o$
    in the sense of Definition~\ref{def:sol2}.
  \item For all $\rho_o \in (\L1 \cap \BV) (\reali^d; [0,R]^n)$, the
    map $t \mapsto S_t \rho_o$ is in $\C0\left(\reali^+; \L1 (\reali^d;
      [0, R]^n) \right)$.
  \item For all $\rho_o \in (\L1 \cap \BV) (\reali^d; [0,R]^n)$, the
    following estimate holds:
    \begin{equation}
      \label{eq:6}
      \tv (S_t \rho_o)
      \leq
      \tv(\rho_o) \, e^{\kappa_o t}
      +
      d \, W_d \, e^{\kappa_o t} \norma{q}_{\L\infty([0,R])}
      (C_I + \norma{\div \vec v}_{\L\infty})
      \, t
      \,,
    \end{equation}
    where $W_d$ is defined in~\eqref{eq:Wd}.
  \item Fix $M>0$. Let $v_1, v_2$ satisfy~\textbf{(v)}, $\vec v_1,
    \vec v_2$ satisfy~\textbf{($\boldsymbol{v}$)} and $\mathcal{I}_1,
    \mathcal{I}_2$ satisfy~\textbf{(I)}. Then, there exist $b,c \in
    \C0 (\reali^+; \reali^+)$ such that for all $\rho_{o,1},
    \rho_{o,2} \in \L1 (\reali^d; [0,R]^d)$ with $\tv (\rho_{o,i})
    \leq M$ and for all $t \in \reali^+$
    \begin{eqnarray*}
      \!\!
      \norma{S_t\rho_{o,1} - S_t \rho_{o,2}}_{\L1}
      & \leq &
      \!\!\!
      (1+t\, e^{t\, b (t)}) \norma{\rho_{o,1} - \rho_{o,2}}_{\L1}
      \\
      & &
      \!\!\!
      +
      t\, c (t)
      \left(
        \norma{q_1 - q_2}_{\W{1}{\infty}}
        +
        \norma{\vec v_1 - \vec v_2}_{\L\infty}
        +
        \norma{\div (\vec v_1 - \vec v_2)}_{\L1}
      \right) .
    \end{eqnarray*}
    and the functions $b,c$ depend on $d$, $C_I$, $R$, 
    $\norma{q_1}_{\W1\infty}$, $\norma{\vec v_1}_{\W1\infty}$,
    $\norma{\div\vec v_1}_{\W{1}{1}}$, $\norma{\nabla\eta}_{\L\infty}$
    and on $M$.
  \end{enumerate}
\end{theorem}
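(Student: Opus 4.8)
The plan is to build the semigroup $S$ for~\eqref{eq:SCLn}--\eqref{eq:Second} by a Banach fixed point argument on $\C0([0,\tau];\L1)$, patched together with uniform a priori bounds so that $\tau$ can be taken independent of the time window, yielding global existence on $\reali^+$. Given a candidate $\rho=(\rho^1,\dots,\rho^n)\in\C0([0,\tau];\L1(\reali^d;[0,R]^n))$, each coefficient $\direct^i(t,x)=\vec v^i(x)+\mathcal I^i(\rho(t))(x)$ is a prescribed vector field whose regularity is controlled by~\textbf{($\boldsymbol{\vec v}$)} and~\textbf{(I)}: it lies in $\C2$ with $\div\direct^i$ of bounded $\W11$ norm, uniformly in $t$. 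Hence the $i$-th scalar equation $\partial_t r^i+\div(q^i(r^i)\,\direct^i(t,x))=0$ is a genuine (local) scalar conservation law with space- and time-dependent flux $g^i(t,x,r^i)=q^i(r^i)\direct^i(t,x)$, to which classical Kru\v zkov theory applies: there is a unique entropy solution $r^i$ with values in $[0,R]$ (invariance of $[0,R]$ follows since $q^i(0)=0$ and $q^i(R)=R\,v^i(R)=0$), and with $\L1$ and $\tv$ estimates depending on $\norma{g^i}_{\L\infty}$, $\norma{\pt_{r}g^i}_{\L\infty}$ and $\norma{\div_x g^i}$. Define the map $\mathcal T$ sending $\rho$ to this tuple of solutions; a weak entropy solution in the sense of Definition~\ref{def:sol2} is exactly a fixed point of $\mathcal T$.

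Next I would quantify the contraction. Two candidates $\rho,\rho'$ produce coefficients differing by $\direct^i-\direct'^i=\mathcal I^i(\rho)-\mathcal I^i(\rho')$, with $\norma{\direct^i-\direct'^i}_{\L\infty}\le C_I\sup_t\norma{\rho(t)-\rho'(t)}_{\L1}$ and a matching $\L1$ bound on the divergence, by~\textbf{(I)}. The standard Kru\v zkov-type stability estimate for entropy solutions of conservation laws with respect to perturbations of the flux — the $\L1$ dependence on the flux function used, e.g., in~\cite{ColomboMercierRosini,Lecureux} — then gives $\norma{r^i(t)-r'^i(t)}_{\L1}\le \mathcal O(1)\,(t+\tv\text{-weight})\,\norma{\direct^i-\direct'^i}$, hence $\norma{\mathcal T\rho(t)-\mathcal T\rho'(t)}_{\L1}\le C\,t\,\sup_{[0,t]}\norma{\rho-\rho'}_{\L1}$ with $C$ controlled by $C_I$, $R$, $\norma{q}_{\W1\infty}$, $\norma{\vec v}_{\W1\infty}$ and $\norma{\div\vec v}_{\W11}$; choosing $\tau<1/C$ makes $\mathcal T$ a contraction, proving existence and uniqueness (item~1) on $[0,\tau]$. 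Since the total variation estimate to be established in item~3 is linear-exponential and the $[0,R]$ bound is absolute, none of the constants blow up in finite time, so the local solution extends to all of $\reali^+$; the semigroup property follows from uniqueness. Continuity $t\mapsto S_t\rho_o$ in $\L1$ (item~2) is inherited from the corresponding continuity of Kru\v zkov solutions once the coefficients are known to be continuous in time, which they are because $t\mapsto\rho(t)$ is $\L1$-continuous and $\mathcal I^i$ is Lipschitz $\L1\to\C2$.

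For item~3, the TV bound, I would run the fixed point on the space of functions additionally satisfying the bound~\eqref{eq:6}, showing it is preserved by $\mathcal T$. The scalar estimate for $\tv(r^i(t))$ has the Kru\v zkov form $\tv(r^i(t))\le \tv(\rho^i_o)+\int_0^t(\kappa\,\tv(r^i(s))+d\,W_d\,\norma{q^i}_{\L\infty}\norma{\div_x\direct^i(s)}_{\L1})\,ds$, where $\kappa$ comes from $\norma{\nabla_x(q^i)'\cdot\direct^i}_{\L\infty}$ and $W_d$ is the geometric constant in~\eqref{eq:Wd}; bounding $\norma{\div_x\direct^i}_{\L1}\le C_I\norma{\rho}_{\L1}+\norma{\div\vec v}_{\L\infty}$ — here one also uses $\norma{\rho(t)}_{\L1}$ stays bounded, which is immediate since $\rho^i\le R$ and mass is essentially conserved up to the divergence term — and applying Gr\"onwall yields precisely~\eqref{eq:6}. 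Item~4, continuous dependence on $v$, $\vec v$ and $\mathcal I$, is then obtained by comparing the two fixed point problems: writing $S_t\rho_{o,1}-S_t\rho_{o,2}$ and inserting the solution of one problem into the other, one picks up (i) the initial-data difference, amplified by the already-established $\L1$-Lipschitz constant $e^{tb(t)}$, and (ii) the flux differences $\norma{q_1-q_2}_{\W1\infty}$, $\norma{\vec v_1-\vec v_2}_{\L\infty}$, $\norma{\div(\vec v_1-\vec v_2)}_{\L1}$, each entering through the scalar flux-stability estimate with a coefficient $t\,c(t)$ built from the TV bound~\eqref{eq:6} and the structural constants. The hard part is making the fixed point time step $\tau$ and all the Gr\"onwall constants genuinely uniform across the whole class of data, velocities and functionals with the stated bounds, rather than merely locally in time; once that uniformity is in place — which is exactly what~\eqref{eq:6} and the $[0,R]$-invariance provide — the global statements follow by iterating the local construction, and the quantitative dependence of $b,c$ on $d,C_I,R,\norma{q_1}_{\W1\infty},\norma{\vec v_1}_{\W1\infty},\norma{\div\vec v_1}_{\W11},\norma{\nabla\eta}_{\L\infty},M$ is just bookkeeping through these estimates.
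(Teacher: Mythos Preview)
Your proposal is correct and follows essentially the same route as the paper: freeze $\rho$ in the nonlocal term to reduce each component to a Kru\v zkov scalar law with coefficient $\direct^i=\vec v^i+\mathcal I^i(\rho)$, use $q^i(0)=q^i(R)=0$ for the $[0,R]$-invariance, apply the scalar flux-stability estimate (the paper packages this as Lemma~\ref{lem:scalar}, drawing on~\cite{Lecureux}) to get a contraction for short time, then iterate globally via the $\tv$ bound, and finally obtain item~4 by the same stability estimate plus Gronwall. One small correction: in your contraction constant you should include the dependence on $\tv(\rho_o)$ (it enters through the scalar stability estimate), which is exactly why the iteration argument needs the a~priori $\tv$ bound~\eqref{eq:6} rather than a uniform~$\tau$.
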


\noindent Detailed expressions for the functions $b$ and $c$ are
available, together with the proof, in Paragraph~\ref{subsec:TD2}.

This theorem also provides a kind of maximum property since each
density remains bounded by $R$. However, it does not guarantee that
the sum $\rho_1(t) + \ldots + \rho_n(t)$ remains bounded by $R$.

\smallskip

Preliminary to any use of~\eqref{eq:SCLn}--\eqref{eq:Second} is the
choice of a specific $\mathcal{I}$. First, we consider the following
lemma that eases the construction of operators that
satisfy~\textbf{(I)}.

\begin{lemma}
  \label{lem:easy2}
  Let $\mathcal{N} \colon \reali^d \to \reali^d$ be defined by
  \begin{displaymath}
    \mathcal{N} (u) = \frac{u}{\sqrt{1+\norma{u}^2}} \,.
  \end{displaymath}
  If $\mathcal{I}$ satisfies~\textbf{(I)}, then so does $\mathcal{N}
  \circ \mathcal{I}$.
\end{lemma}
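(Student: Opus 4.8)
The plan is to verify directly that each of the four inequalities in condition \textbf{(I)} is inherited by $\mathcal{N}\circ\mathcal{I}$ from $\mathcal{I}$, exploiting that $\mathcal{N}$ is a bounded, globally Lipschitz, $\C\infty$ map with $\mathcal{N}(0)=0$ and $\norma{\D\mathcal{N}(u)}\le 1$, $\norma{\D^2\mathcal{N}(u)}\le C$ for a universal constant $C$. First I would record the chain rules: writing $w^i=\mathcal{I}^i(\rho)$, one has $\nabla(\mathcal{N}\circ w^i)=\D\mathcal{N}(w^i)\,\nabla w^i$, hence $\norma{\nabla(\mathcal{N}\circ\mathcal{I}^i(\rho))}_{\L\infty}\le\norma{\D\mathcal{N}}_{\L\infty}\norma{\nabla w^i}_{\L\infty}\le\norma{\nabla\mathcal{I}^i(\rho)}_{\L\infty}\le C_I\norma{\rho}_{\L1}$, which is the first bound with the same constant.

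Next I would treat the second-order divergence bound. Since $\div(\mathcal{N}\circ w^i)=\operatorname{tr}\bigl(\D\mathcal{N}(w^i)\,\D w^i\bigr)$, differentiating once more produces two terms: one containing $\D^2\mathcal{N}(w^i)$ contracted with $\nabla w^i\otimes\D w^i$, and one containing $\D\mathcal{N}(w^i)$ contracted with $\nabla\D w^i$. The second term is bounded in $\L1$ by $\norma{\nabla\div\mathcal{I}^i(\rho)}_{\L1}\le C_I\norma{\rho}_{\L1}$ (up to the universal bound on $\D\mathcal{N}$; more precisely one controls $\norma{\nabla\D w^i}_{\L1}$ via $\norma{\nabla\div w^i}_{\L1}$ together with $\norma{D^2 w^i}$, using the assumed $\C2$ regularity of $\mathcal{I}^i(\rho)$). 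The first term is bounded in $\L1$ by $C\,\norma{\nabla w^i}_{\L\infty}\norma{\D w^i}_{\L1}$; here $\norma{\nabla w^i}_{\L\infty}\le C_I\norma{\rho}_{\L1}$ from \textbf{(I)}, and $\norma{\D w^i}_{\L1}$ is finite because $\mathcal{I}^i$ maps into $\C2$, so this term is $\le C\,C_I^2\norma{\rho}_{\L1}^2$; absorbing it gives the desired bound with a new constant depending only on $C_I$ and the universal constants for $\mathcal{N}$.

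For the two Lipschitz estimates I would use the same splitting on $w^i=\mathcal{I}^i(\rho)$ versus $w'^i=\mathcal{I}^i(\rho')$. The $\L\infty$ difference is immediate: $\norma{\mathcal{N}(w^i)-\mathcal{N}(w'^i)}_{\L\infty}\le\norma{w^i-w'^i}_{\L\infty}\le C_I\norma{\rho-\rho'}_{\L1}$ by the $1$-Lipschitz continuity of $\mathcal{N}$. For the divergence Lipschitz bound, write $\div\mathcal{N}(w^i)-\div\mathcal{N}(w'^i)=\operatorname{tr}\bigl(\D\mathcal{N}(w^i)(\D w^i-\D w'^i)\bigr)+\operatorname{tr}\bigl((\D\mathcal{N}(w^i)-\D\mathcal{N}(w'^i))\D w'^i\bigr)$; the first term is bounded in $\L1$ by $\norma{\div(w^i-w'^i)}_{\L1}\le C_I\norma{\rho-\rho'}_{\L1}$, while the second is bounded by $C\,\norma{w^i-w'^i}_{\L\infty}\norma{\D w'^i}_{\L1}\le C\,C_I^2\norma{\rho-\rho'}_{\L1}\norma{\rho'}_{\L1}$, using again the universal Lipschitz bound on $\D\mathcal{N}$ and that $\mathcal{I}^i(\rho')\in\C2$. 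Collecting all four estimates shows $\mathcal{N}\circ\mathcal{I}$ satisfies \textbf{(I)} with a constant $C_I'$ depending only on $C_I$ and the universal bounds on $\mathcal{N}$ and its first two derivatives.

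The only genuinely delicate point is the appearance of the quadratic factors $\norma{\rho}_{\L1}\norma{\D w^i}_{\L1}$ in the second-order and Lipschitz divergence estimates: one must be sure that $\norma{\D\mathcal{I}^i(\rho)}_{\L1}$ is actually finite, which is guaranteed by the hypothesis that $\mathcal{I}^i$ takes values in $\C2(\reali^d;\reali^d)$, but this $\L1$ norm is not bounded by $\norma{\rho}_{\L1}$ uniformly; hence strictly speaking the constant $C_I'$ produced this way could depend on $\norma{\D\mathcal{I}^i(\rho)}_{\L1}$. In applications $\mathcal{I}^i(\rho)=\mathcal{J}(\rho\conv\eta)$ with $\mathcal{J}$ smooth and $\eta$ compactly supported, so $\norma{\D\mathcal{I}^i(\rho)}_{\L1}$ is indeed controlled by $\norma{\rho}_{\L1}$, and the bound closes cleanly; I would either add this mild extra hypothesis or state the lemma for that concrete class. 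Otherwise the argument is entirely routine.
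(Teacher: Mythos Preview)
The paper does not actually give a proof of this lemma: it states only that ``the proof consists in long but elementary computations and is hence omitted.'' Your approach---checking each of the four inequalities in \textbf{(I)} directly via the chain rule, using the global bounds on $\mathcal{N}$, $\D\mathcal{N}$ and $\D^2\mathcal{N}$---is exactly the kind of computation the authors have in mind, so there is no discrepancy of method to report.

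Your closing caveat is the important part and is correct. Condition \textbf{(I)} bounds $\norma{\nabla\mathcal{I}^i(\rho)}_{\L\infty}$ and $\norma{\nabla\div\mathcal{I}^i(\rho)}_{\L1}$, but it gives neither $\norma{\D\mathcal{I}^i(\rho)}_{\L1}<\infty$ (your remark that $\C2$ alone guarantees this is not right on~$\reali^d$) nor control of the full second derivative $\D^2\mathcal{I}^i(\rho)$ in $\L1$; and the cross terms produced by the chain rule are quadratic in $\norma{\rho}_{\L1}$ rather than linear. So, read literally, \textbf{(I)} for $\mathcal{I}$ does not by itself yield \textbf{(I)} for $\mathcal{N}\circ\mathcal{I}$ with a constant independent of $\rho$. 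In the paper's setting this is harmless: the concrete operators used are built from convolutions $\rho\conv\eta$ with $\eta\in\Cc2$, for which all the missing $\L1$ bounds hold and scale linearly in $\norma{\rho}_{\L1}$; moreover, along solutions the $\L1$ norm is conserved, so quadratic factors can be absorbed. Your suggestion to state the lemma with a mild additional hypothesis (e.g.\ $\norma{\D\mathcal{I}^i(\rho)}_{\L1}\le C_I\norma{\rho}_{\L1}$) or for the concrete convolution class is the clean fix.
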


The proof consists in long but elementary computations and is hence
omitted. By Lemma~\ref{lem:easy2}, it is immediate to check
that~\eqref{eq:I} is satisfied by the operator $\mathcal{I}$ defined
in~\eqref{eq:I} in the case of one population, as soon as $\eta \in
\Cc2 (\reali^d; \reali^+)$ and $\norma{\eta}_{\L1} = 1$, considered
in~\cite{ColomboHertyMercier}. When more populations are present, it
is natural to consider different kinds of interactions. For instance,
the population $\rho^1$ might deviate from its preferred path
$\vec{v}^1$ due to the population $\rho^2$ pushing in the direction
$\vec{v}^2$, thus leading to consider the operator
\begin{equation}
  \label{eq:I12}
  \mathcal{I}^{12} (\rho)
  =
  \frac{
    \left(\rho^2 v(\rho^2 )\vec{v}^2 \right) \conv \eta
  }{
    \sqrt{1+\norma{\left(\rho^2  v(\rho^2)\vec{v}^2 \right) \conv \eta}^2}
  }\,,
\end{equation}
Under assumption~\textbf{(v)} on $v$ and with $\eta \in \Cc2
(\reali^d;\reali^+)$ such that $\norma{\eta}_{\L1}=1$, also
$\mathcal{I}^{12}$ as defined in~\eqref{eq:I12}
satisfies~\textbf{(I)}.

When these or other operators are to be considered together, then the
following lemma makes the verification of~\textbf{(I)} immediate.

\begin{lemma}
  \label{lem:easy}
  Let $\hat{\mathcal{I}}$ and $\check{\mathcal{I}}$
  satisfy~\textbf{(I)}. Fix any two functions $\alpha,\beta \in (\C2\cap\W2\infty)
  (\reali^d; \reali)$. Then, also $\alpha \, \hat{\mathcal{I}} + \beta
  \, \check{\mathcal{I}}$ satisfies~\textbf{(I)}.
\end{lemma}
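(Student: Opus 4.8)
The plan is to verify directly that the four inequalities constituting hypothesis \textbf{(I)} hold for the operator $\mathcal{J}^i := \alpha \, \hat{\mathcal{I}}^i + \beta \, \check{\mathcal{I}}^i$. First I would record that, since $\alpha,\beta \in (\C2\cap\W2\infty)(\reali^d;\reali)$, both the functions and their first and second derivatives are bounded; in particular $\mathcal{J}^i$ indeed maps $\L1(\reali^d;[0,R]^n)$ into $\C2(\reali^d;\reali^d)$, so the codomain requirement in \textbf{(I)} is met. Then I would set $C_{\mathcal{J}} := \O \cdot (1 + \norma{\alpha}_{\W2\infty} + \norma{\beta}_{\W2\infty}) \cdot C_I$ as the target constant, where the implicit constant depends only on $d$, and check the four bounds one at a time by expanding derivatives through the Leibniz rule.

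For the first bound, $\nabla \mathcal{J}^i(\rho) = (\nabla\alpha)\,\hat{\mathcal{I}}^i(\rho) + \alpha\,\nabla\hat{\mathcal{I}}^i(\rho) + (\nabla\beta)\,\check{\mathcal{I}}^i(\rho) + \beta\,\nabla\check{\mathcal{I}}^i(\rho)$; the terms with a derivative falling on $\hat{\mathcal{I}}^i$ or $\check{\mathcal{I}}^i$ are controlled directly by the first line of \textbf{(I)}, while the terms where the derivative falls on $\alpha$ or $\beta$ require an $\L\infty$ bound on $\hat{\mathcal{I}}^i(\rho)$ and $\check{\mathcal{I}}^i(\rho)$ themselves. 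This last estimate does not appear verbatim in \textbf{(I)}, so I would derive it: using the Lipschitz bound (third line of \textbf{(I)}) with $\rho'=0$ gives $\norma{\hat{\mathcal{I}}^i(\rho) - \hat{\mathcal{I}}^i(0)}_{\L\infty} \leq C_I\norma{\rho}_{\L1}$, and $\hat{\mathcal{I}}^i(0)$ is a fixed element of $\C2$, hence bounded; absorbing that constant is the only mildly delicate bookkeeping point. The same reasoning handles $\check{\mathcal{I}}^i$. For the second bound one expands $\nabla\div(\alpha\,\hat{\mathcal{I}}^i(\rho))$: it produces terms of the shape $(\text{derivative of }\alpha\text{ up to order }2)\times(\text{derivative of }\hat{\mathcal{I}}^i\text{ up to order }2)$, and each is estimated either by the second line of \textbf{(I)} (when two derivatives hit $\hat{\mathcal{I}}^i$), by the first line (one derivative on $\hat{\mathcal{I}}^i$, needing the $\L1\to\L\infty$ smallness, with the remaining derivative on $\alpha$ giving a bounded factor and integration over the compact-ish behaviour handled by the $\W2\infty$ norm of $\alpha$), or by the $\L\infty$ bound on $\hat{\mathcal{I}}^i(\rho)$ derived above together with $\norma{\nabla^2\alpha}_{\L1}$-type control. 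The two Lipschitz bounds (third and fourth lines) are proven identically, replacing $\hat{\mathcal{I}}^i(\rho)$ by the difference $\hat{\mathcal{I}}^i(\rho) - \hat{\mathcal{I}}^i(\rho')$ throughout and using that $\alpha,\beta$ are independent of $\rho$ so they factor out cleanly.

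The main obstacle, such as it is, is purely a matter of matching norms: hypothesis \textbf{(I)} gives $\L\infty$ control on $\nabla\mathcal{I}^i$ and $\L1$ control on $\nabla\div\mathcal{I}^i$, but the Leibniz expansion also generates terms like $\mathcal{I}^i(\rho)\,\nabla^2\alpha$, which one must bound in $\L1$; this works precisely because $\nabla^2\alpha\in\L1\cap\L\infty$ (from $\alpha\in\W2\infty$ — though strictly one also wants $\nabla^2\alpha\in\L1$, which should be read as part of the intended hypothesis, or else the pointwise $\L\infty$ bound on $\mathcal{I}^i(\rho)$ combined with $\nabla^2\alpha\in\L1$ closes it) and $\mathcal{I}^i(\rho)\in\L\infty$ with norm linear in $\norma{\rho}_{\L1}$. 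Once every cross term is assigned to the right line of \textbf{(I)} and the fixed contributions $\hat{\mathcal{I}}^i(0)$, $\check{\mathcal{I}}^i(0)$ are absorbed, the constant $C_{\mathcal{J}}$ displayed above works, and the proof is complete. As this amounts to a finite list of routine product-rule estimates, I would state the constant and the assignment of terms, and leave the arithmetic to the reader.
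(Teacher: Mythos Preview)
The paper does not prove this lemma at all: immediately after the statement it reads ``The proof is elementary and hence omitted.'' Your direct verification via the Leibniz rule is exactly the kind of argument the authors have in mind, and your term-by-term bookkeeping is the right plan.

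That said, the two ``mildly delicate'' points you flag are not mere bookkeeping; they are genuine gaps in the lemma \emph{as literally stated}, and your proposed fixes do not close them. First, setting $\rho'=0$ in the third line of~\textbf{(I)} indeed gives $\norma{\hat{\mathcal{I}}^i(\rho)-\hat{\mathcal{I}}^i(0)}_{\L\infty}\le C_I\norma{\rho}_{\L1}$, and the first line forces $\hat{\mathcal{I}}^i(0)$ to be a constant vector $c$; but if $c\neq 0$ and $\nabla\alpha\not\equiv 0$, the cross term $(\nabla\alpha)\,c$ in $\nabla\bigl(\alpha\,\hat{\mathcal{I}}^i(\rho)\bigr)$ has $\L\infty$ norm bounded below by a positive constant independent of $\rho$, so no choice of $C_{\mathcal J}$ can make $\norma{\nabla\mathcal{J}^i(\rho)}_{\L\infty}\le C_{\mathcal J}\norma{\rho}_{\L1}$ hold for all $\rho$. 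You cannot ``absorb'' an additive constant into a bound that must vanish at $\rho=0$. Second, in the expansion of $\nabla\div\bigl(\alpha\,\hat{\mathcal{I}}^i(\rho)\bigr)$ the cross term $(\nabla\alpha)^{\!T}\nabla\hat{\mathcal{I}}^i(\rho)$ must be controlled in $\L1$; hypothesis~\textbf{(I)} only gives $\nabla\hat{\mathcal{I}}^i(\rho)\in\L\infty$, so one needs $\nabla\alpha\in\L1$, which $\alpha\in(\C2\cap\W2\infty)$ alone does not provide. The same obstruction hits $(\nabla^2\alpha)\,\hat{\mathcal{I}}^i(\rho)$.

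In short: your diagnosis is correct, but the cure is not to absorb constants --- it is to add hypotheses. In every application in the paper (the systems~\eqref{eq:2D2p1} and~\eqref{eq:2D2p}) the coefficients $\alpha,\beta$ are constants $\epsilon_{ij}$, so $\nabla\alpha=\nabla\beta=0$ and both obstructions disappear trivially; the concrete operators also satisfy $\mathcal{I}^i(0)=0$. A clean version of the lemma would either assume $\alpha,\beta$ constant, or add $\hat{\mathcal{I}}^i(0)=\check{\mathcal{I}}^i(0)=0$ together with $\nabla\alpha,\nabla^2\alpha,\nabla\beta,\nabla^2\beta\in\L1$. With any of these amendments your Leibniz-rule argument goes through exactly as you outline.
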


\noindent The proof is elementary and hence omitted.

Differently from the model in Section~\ref{sec:1}, the semigroup
generated by~\eqref{eq:SCLn}--\eqref{eq:Second} is not proved to be
differentiable with respect to the initial data. On other hand, it
develops solutions with an apparently rich structure. Moreover, as
stated in Theorem~\ref{thm:main2}, an upper bound in $\L\infty$ is
available.

\subsection{Numerical Integration}

The study of the qualitative properties of the solutions
to~\eqref{eq:SCLn}--\eqref{eq:Second} is in general not amenable to
purely analytical tools. Numerical integrations, besides being of
interest from the point of view of the applications, show interesting
pattern formations. In the case~\eqref{eq:SCL}--\eqref{eq:1} of a
single population, the formation of queues was already noted
in~\cite{ColomboGaravelloMercier}.

The algorithm used is the Lax-Friedrichs method with dimensional
splitting. A uniform grid $(x_i,y_j)$ for $i = 1, \ldots, n_x$ and
$j=1, \ldots, n_y$ is introduced and the density $\rho$ is
approximated through the values $\rho_{ij}$ on this grid. At every
time step, the convolution in $\mathcal{I} (\rho)$ is computed through
products of the type $A_{ih} \rho_{hk} B_{kj}$, where the matrices $A$
and $B$ depend only on $\eta$.

In both the integrations, we use the same geometry and the same
numerical parameters. More precisely, the space available to the
pedestrians is the rectangle $\reali \times [-3, 3]$, while the
numerical domain is $[-8, 8] \times [-4, 4]$. Pedestrian may exit
along the segments $\{-8\} \times [-3, 3]$ and $\{8\} \times [-3,
3]$. The mesh size is $\Delta x = \Delta y = 0.025$.

The preferred path of each pedestrian is the sum $g+\delta$. The
vector $g$ is tangent to the geodesic towards the pedestrian's target
or $0$ when the pedestrians would not move. The vector $\delta$
describes the \emph{discomfort} of pedestrian when walking too near to
a wall. It is perpendicular to the walls and pointing towards the
interior of the room. Analytically, it also ensures the invariance of
the room, see~\cite{ColomboGaravelloMercier} for more details.  Its
maximum modulus is $\delta_{\max}$ along the walls at $\modulo{x_2} =
\Delta$ and decreases linearly towards the room interior, vanishing at
a distance $\delta_r$ from the walls. In the present integration we
set
\begin{equation}
  \label{eq:3}
  \delta_{\max} = 0.8
  \,,\qquad
  \delta_r = 0.75 \,.
\end{equation}

\subsection{Two Groups of People Crossing}

A classical situation considered in the engineering literature is that
of two groups of people moving in opposite directions and crossing
each other. Typically, \emph{lanes}, also called \emph{paths} or
\emph{trails} in the engineering literature, are formed, see for
instance~\cite{HelbingEtAlii2001, DaamenHoogendoorn2003} or~\cite{DegondEtAl} for a one dimensional description. They consist
of people going in the same direction. The
model~\eqref{eq:SCL}--\eqref{eq:2} captures this phenomenon. Indeed,
we consider~\eqref{eq:SCLn}--\eqref{eq:2} in the following setting:
\begin{equation}
  \label{eq:2D2p1}
  \left\{
    \begin{array}{l}
      \partial_t \rho^1
      +
      \div\left(
        \rho^1 \, v (\rho^1)
        \left(\vec{v}^1 (x)
          -
          \epsilon_{11} \,
          \frac{
            \nabla (\rho^1 \conv \eta)
          }{
            \sqrt{1+\norma{\nabla (\rho^1 \conv \eta)}^2}
          }
          -
          \epsilon_{12} \,
          \frac{
            \nabla (\rho^2 \conv \eta)
          }{
            \sqrt{1+\norma{\nabla (\rho^2 \conv \eta)}^2}
          }
        \right)
      \right)
      =
      0
      \\
      \partial_t \rho^2
      +
      \div\left(
        \rho^2 \, v (\rho^2)
        \left(
          \vec v^2(x)          -
          \epsilon_{21} \,
          \frac{
            \nabla (\rho^1 \conv \eta)
          }{
            \sqrt{1+\norma{\nabla (\rho^1 \conv \eta)}^2}
          }
          -
          \epsilon_{22} \,
          \frac{
            \nabla (\rho^2 \conv \eta)
          }{
            \sqrt{1+\norma{\nabla (\rho^2 \conv \eta)}^2}
          }
        \right)
      \right)
      =
      0
    \end{array}
  \right.
\end{equation}
where the $\rho^1$ population moves to the right and the $\rho^2$
population move to the left. Above, we assume that each pedestrians
wants to avoid entering regions with increasing densities and that the
repulsion towards pedestrians of the other population is bigger than
that due to the proper population. For the sake of simplicity, we use
the same kernel $\eta$ and the same speed $v$ in both equations. We
consider the specific situation defined by
\begin{equation}
  \label{eq:lanes1}
  \begin{array}{@{}rcl@{\quad}rcl@{}}
    \vec{v}^1
    & = &
    \left[
      \begin{array}{@{}c@{}}
        1\\0
      \end{array}
    \right]
    + \delta\,,
    &
    \eta (x_1,x_2)
    & = &
    \left[1-(2\,x_1)^2\right]^3
    \left[1-(2\,x_2)^2\right]^3
    \caratt{[-0.5, 0.5]^2} (x_1,x_2)\,,
    \!\!\!\!\!\!
    \\[15pt]
    \vec{v}^2
    & = &
    \left[
      \begin{array}{@{}c@{}}
        -1\\0
      \end{array}
    \right]
    + \delta\,,
    &
    v (\rho)
    & = &
    4\, (1-\rho)\,,
    \qquad
    \begin{array}{rcl@{\quad}rcl}
      \epsilon_{11} & = & 0.3\,, & \epsilon_{12} & = & 0.7\,,
      \\
      \epsilon_{21} & = & 0.7\,, & \epsilon_{22} & = & 0.3\,.
    \end{array}
  \end{array}
  \!
\end{equation}
As initial datum we choose
\begin{equation}
  \label{eq:4}
  \begin{array}{rcl}
    \rho_o^1 (x_1,x_2)
    & = &
    0.9 \cdot \caratt{[-6.4, -3.2]\times [-2.4,2.4]} (x_1, x_2) \,,
    \\
    \rho_o^2 (x_1,x_2)
    & = &
    0.7 \cdot \caratt{[3.2, 6.4]\times [-2.4,2.4]} (x_1, x_2) \,.
  \end{array}
\end{equation}
The resulting numerical integration in Figure~\ref{fig:NI1} shows
several lanes forming. First, when most of the populations are still
separated, the lanes in the two populations are rather symmetric, in
spite of the different initial densities
\begin{figure}[htpb]
  \centering%
  \includegraphics[width=0.25\textwidth, trim=120 40 121 30]{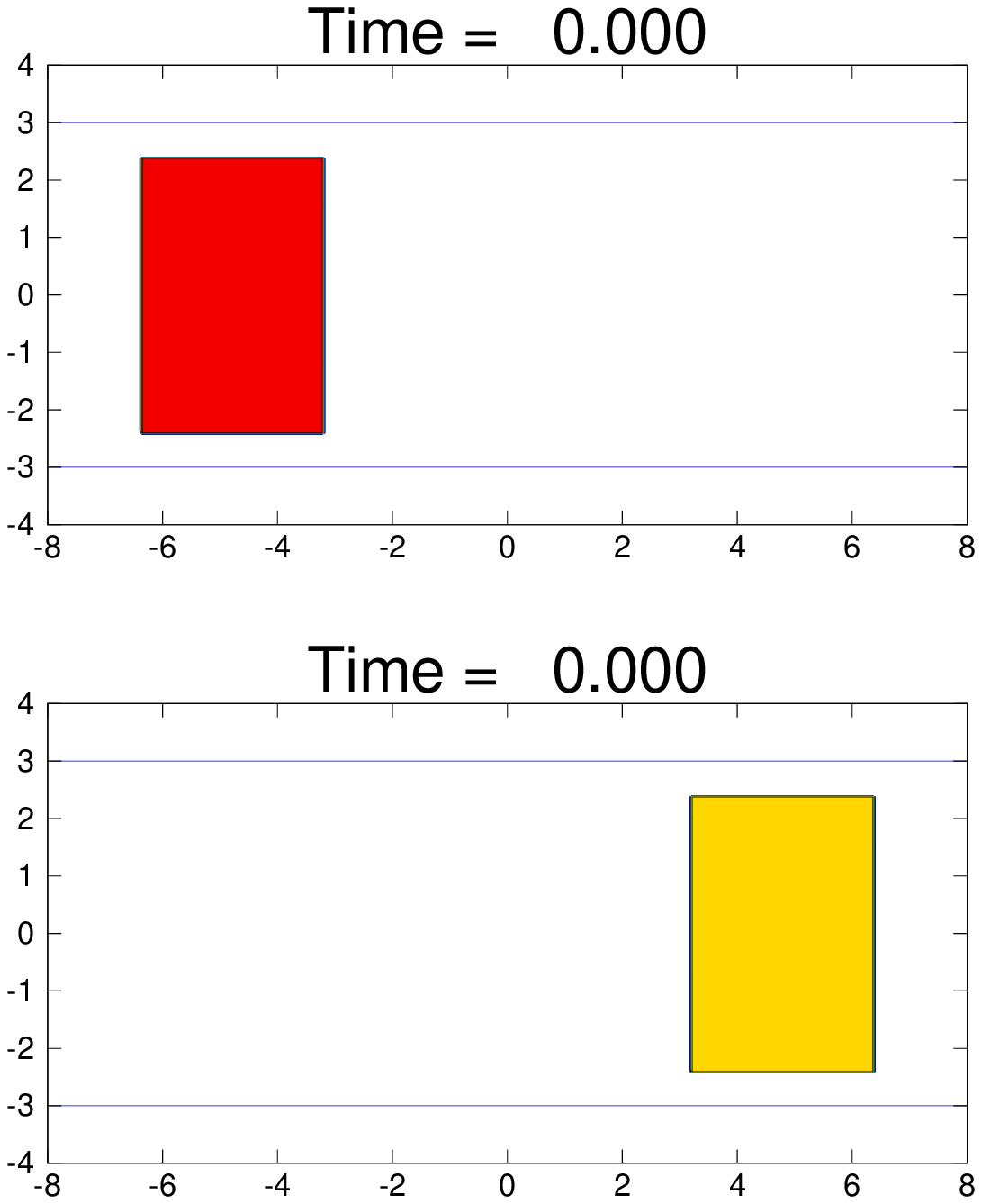}%
  \includegraphics[width=0.25\textwidth, trim=120 40 121 30]{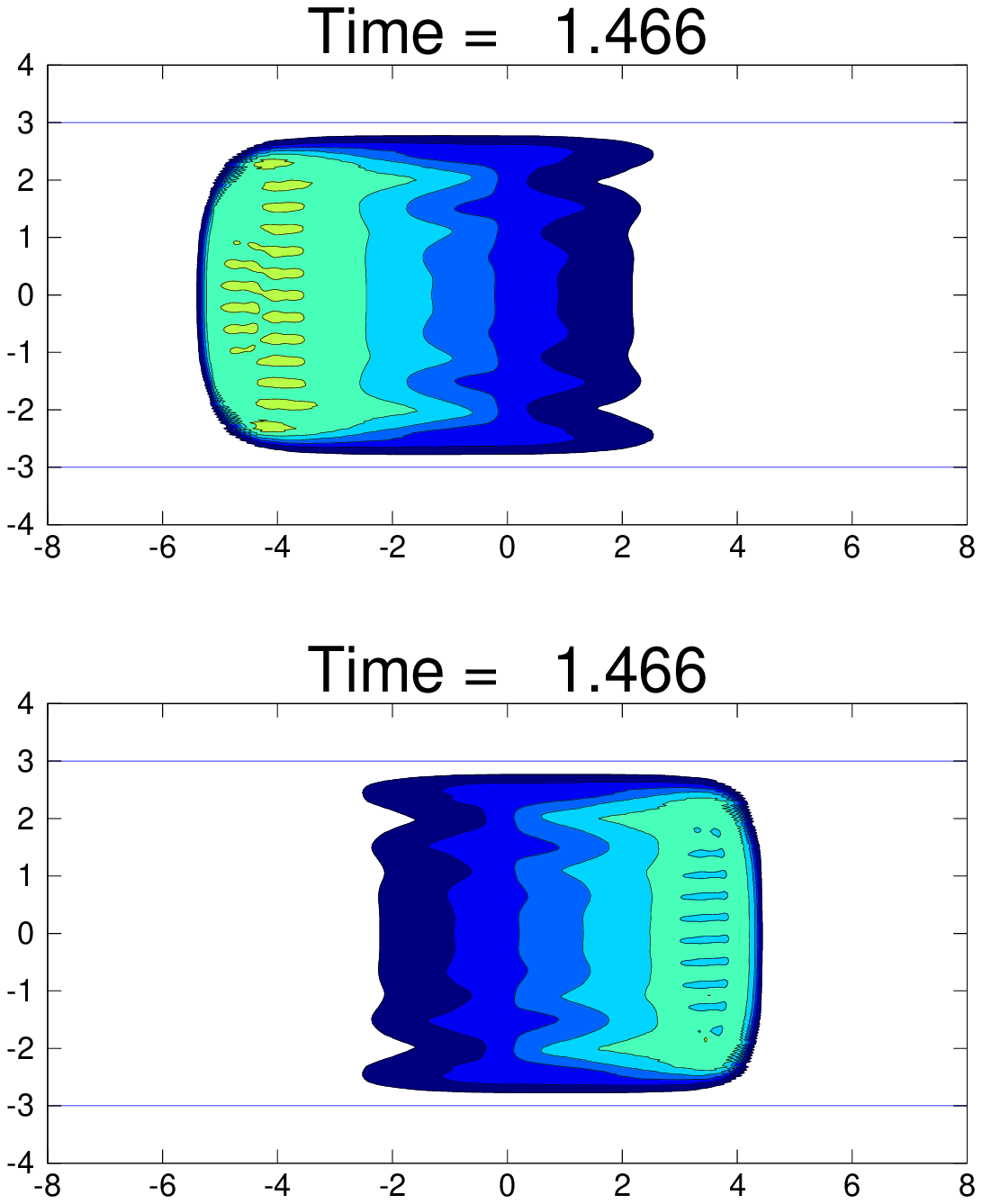}%
  \includegraphics[width=0.25\textwidth, trim=120 40 121 30]{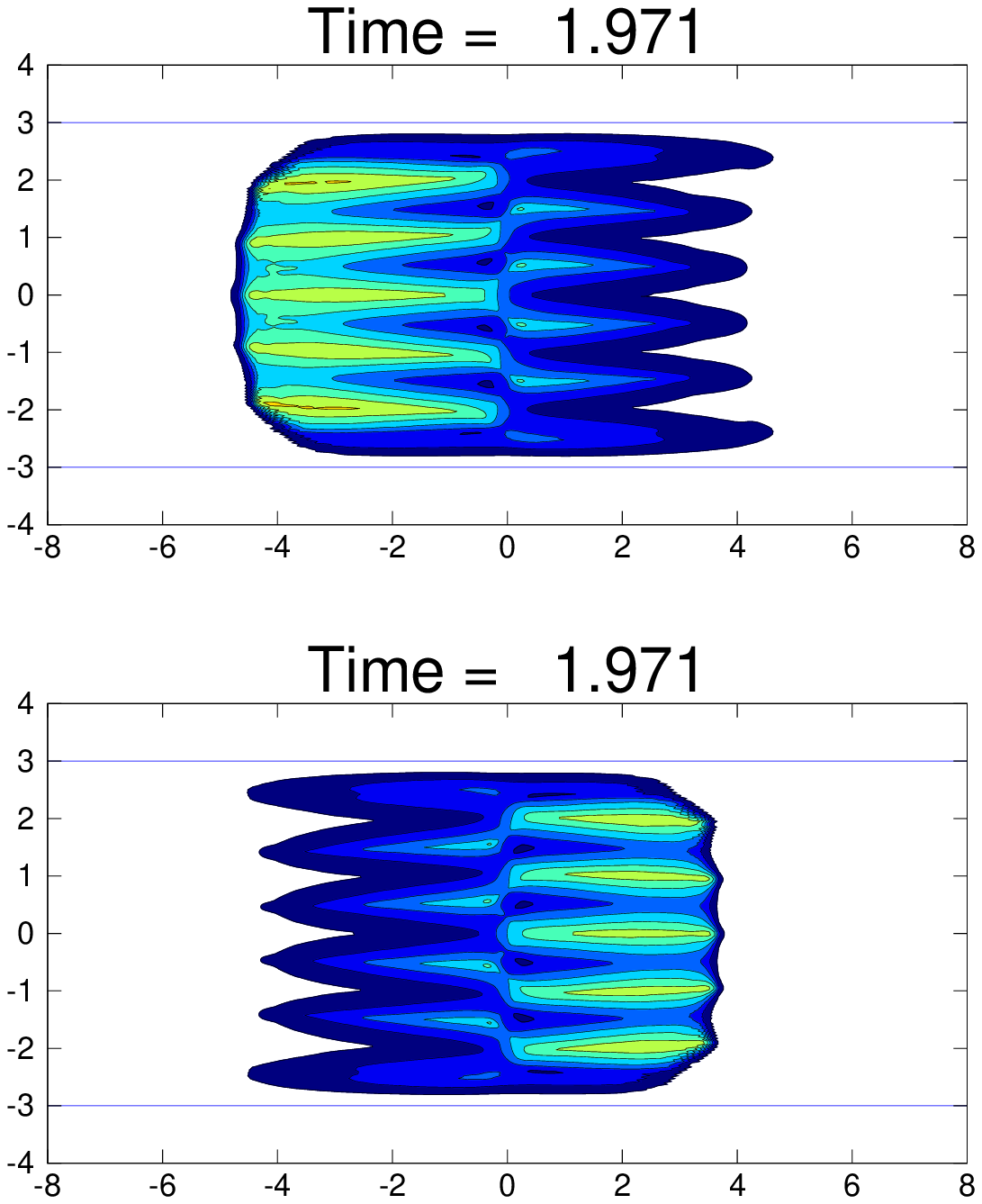}%
  \includegraphics[width=0.25\textwidth, trim=120 40 121 30]{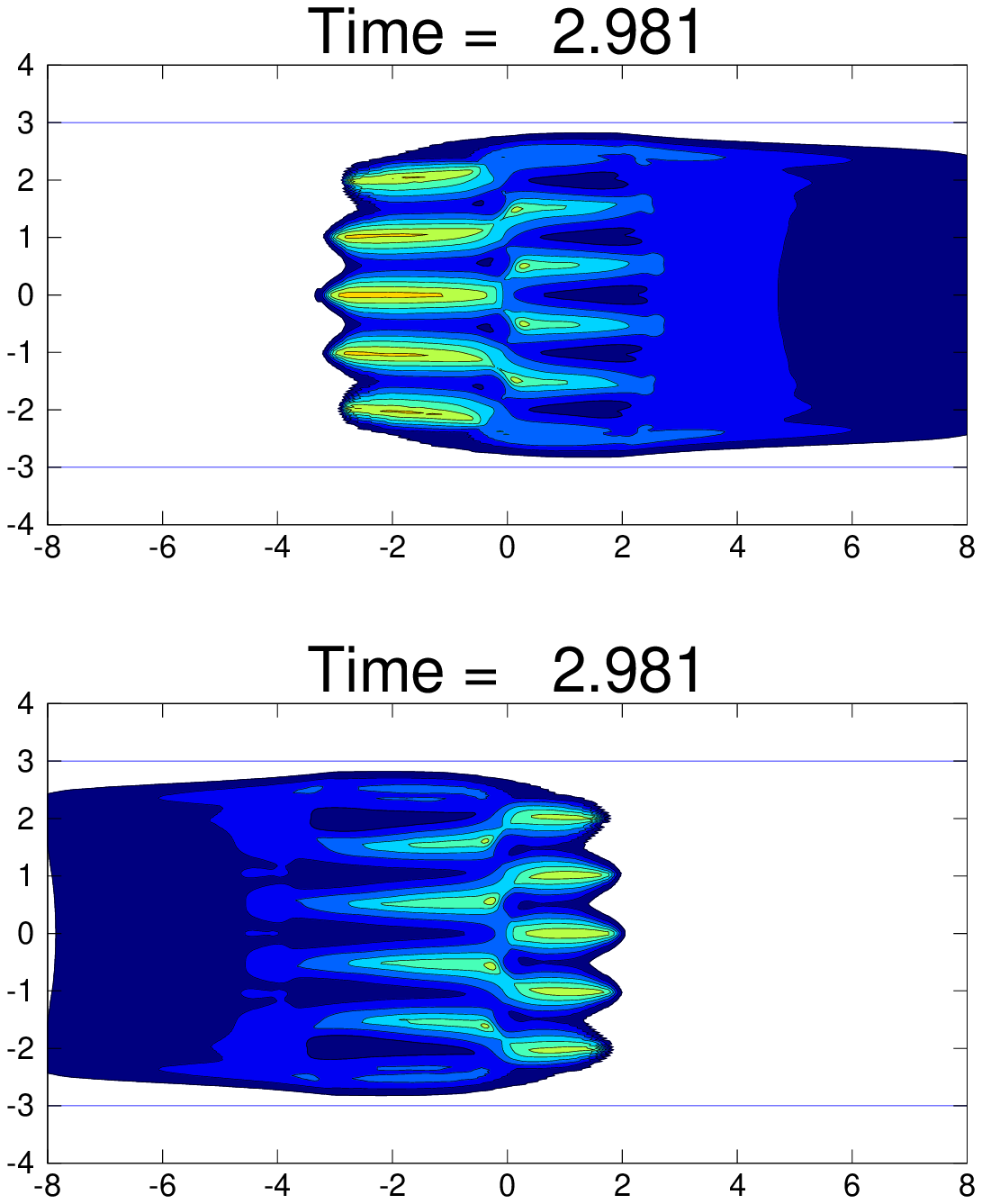}%
  \caption{Numerical integration of~\eqref{eq:2D2p1} with
    data~\eqref{eq:4} and parameters as
    in~\eqref{eq:3}--\eqref{eq:lanes1}. Above, the population $\rho^1$
    moving to the right and, below, $\rho^2$ moving to the left. Note
    the lanes that are formed. First, due to the self-interaction
    within each populations and then due to the crossing between the
    two populations. The latter lanes of different populations do not
    superimpose.}
  \label{fig:NI1}
\end{figure}
Then, when the interaction between $\rho^1$ and $\rho^2$ gets
relevant, lanes sharply bend their trajectory so that they are not
superimposed. In this way, pedestrians lower how much they block each
other. After the interaction, the density is so low that no more
patterns arise.

\subsection{Part of an Audience Leaving a Room}

A sample situations developing various interesting features is the
following.  Two populations are initially uniformly distributed in the
same region. At time $t=0$, the first populations starts moving
towards an exit, on the right in Figure~\ref{fig:NI}), while the
second moves only to let the first one pass. We describe this
situation through~\eqref{eq:SCLn}--\eqref{eq:Second}. The preferred
path of $\rho^1$ is tangent to $\vec{v}^1 = g + \delta$, where $g$ is
the geodesic vector field directed toward the right exit and $\delta$
is the discomfort, as above. The $\rho^1$ population then deviates
from this trajectory only to avoid the other population. On the
contrary, the preferred path of $\rho^2$ is tangent to $\vec{v}^2 =
\delta$, since this population moves only to avoid the walls and the
$\rho^1$ population. All this leads to the system
\begin{equation}
  \label{eq:2D2p}
  \left\{
    \begin{array}{l}
      \partial_t \rho^1
      +
      \div\left(
        \rho^1 \, v (\rho^1)
        \left(\vec{v}^1 (x)
          - \epsilon \,
          \frac{
            \nabla (\rho^2 \conv \eta)
          }{
            \sqrt{1+\norma{\nabla (\rho^2 \conv \eta)}^2}
          }
        \right)
      \right)
      =
      0 \,,
      \\
      \partial_t \rho^2
      +
      \div\left(
        \rho^2 \, v (\rho^2)
        \left(
          \vec v^2(x) - \epsilon \,
          \frac{
            \nabla (\rho^1 \conv \eta)
          }{
            \sqrt{1+\norma{\nabla (\rho^1 \conv \eta)}^2}
          }
        \right)
      \right)
      =
      0 \,.
    \end{array}
  \right.
\end{equation}
We consider the specific situation defined by
\begin{equation}
  \label{eq:lanes}
  \begin{array}{@{}rcl@{\quad}rcl@{}@{\quad}rcl@{}}
    \vec{v}^1
    & = &
    \left[
      \begin{array}{@{}c@{}}
        1\\0
      \end{array}
    \right]
    + \delta\,,
    &
    \eta (x_1, x_2)
    & = &
    \left[1-(2\,x_1)^2\right]^3
    \left[1-(2\,x_2)^2\right]^3
    \caratt{[-0.5, 0.5]^2} (x_1,x_2)\,,
    \!\!\!\!\!\!
    \\[15pt]
    \vec{v}^2
    & = &
    \delta\,,
    &
    v (\rho)
    & = &
    4\, (1-\rho)\,,
    \qquad\qquad
    \epsilon
    \; = \;
    0.3\,.
    \!\!\!\!\!\!\!\!\!\!\!\!
  \end{array}
  \!
\end{equation}
with initial data
\begin{equation}
  \label{eq:5}
  \rho_o^i
  =
0.5 \; \caratt{[-6.4, \, -3.2] \times [-2.4, \, 2.4]}\,,
  \qquad i = 1,2 \,,
\end{equation}
The resulting numerical integration is in Figure~\ref{fig:NI}.
\begin{figure}[htpb]
  \centering%
  \includegraphics[width=0.25\textwidth, trim=120 40 121
  30]{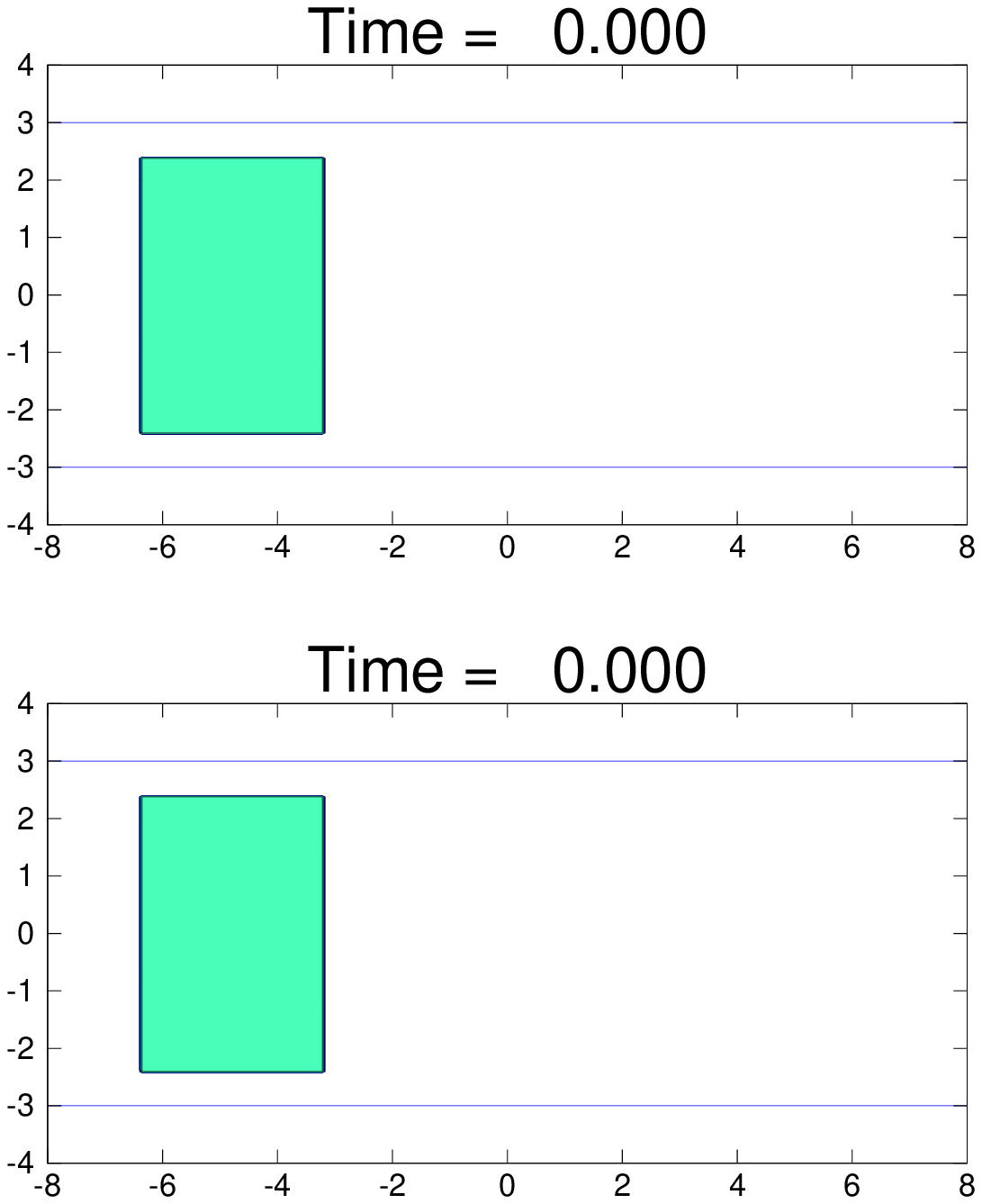}%
  \includegraphics[width=0.25\textwidth, trim=120 40 121
  30]{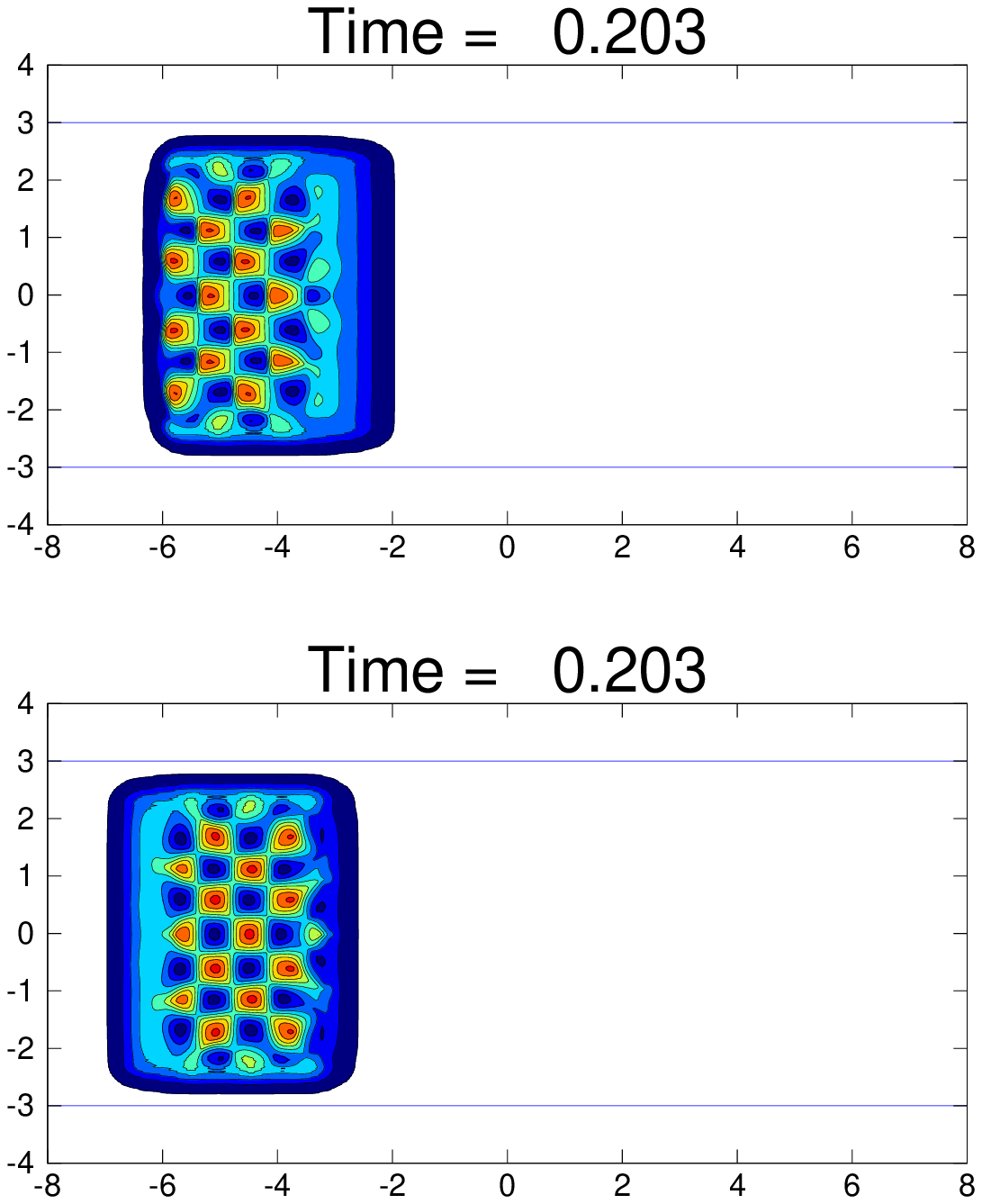}%
  \includegraphics[width=0.25\textwidth, trim=120 40 121
  30]{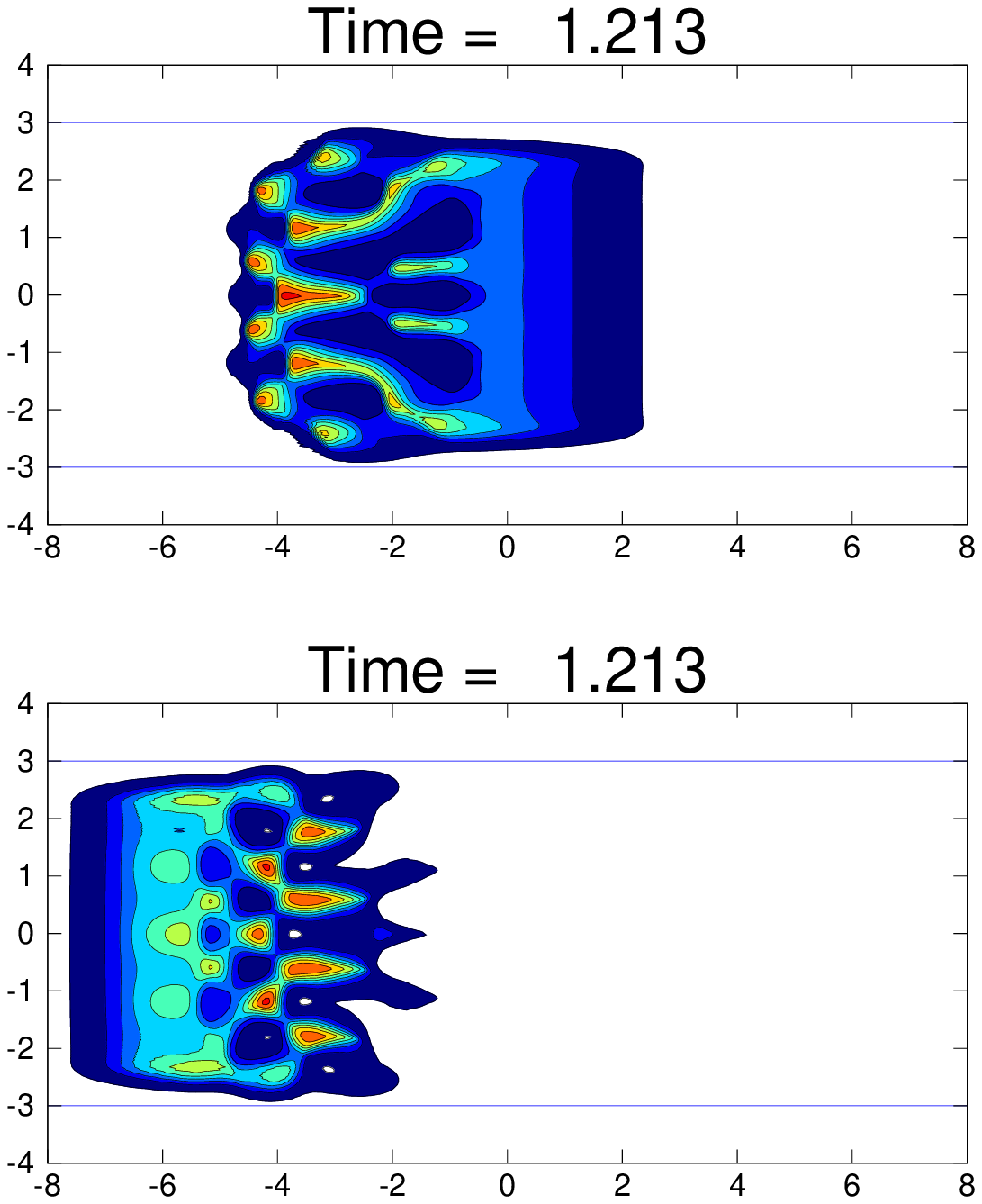}%
  \includegraphics[width=0.25\textwidth, trim=120 40 121
  30]{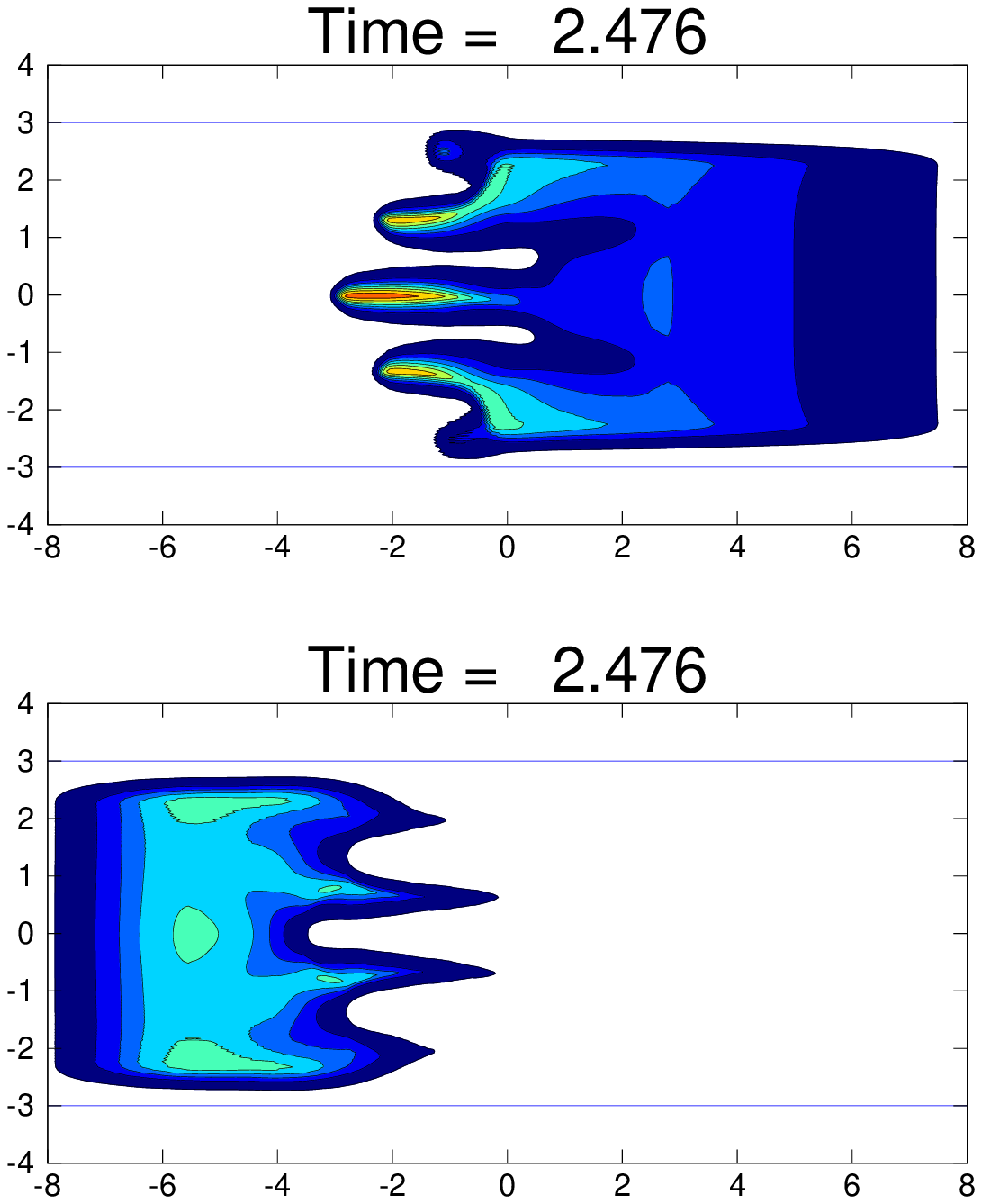}%
  \caption{Numerical integration of~\eqref{eq:2D2p} with data and
    parameters as in~\eqref{eq:3}--\eqref{eq:lanes}. Above, the
    population $\rho^1$ and, below, $\rho^2$. Note first the formation
    of small clusters separating the two populations, then lanes and,
    finally, a sort of fingering.}
  \label{fig:NI}
\end{figure}
Note that the initial distributions of both populations are
uniform. Very quickly, patterns start to form. First,
\emph{clustering}: the different populations separate forming
separated peaks of high density.  Then, population $\rho^1$ starts
moving significantly to the right along \emph{lanes} that follow paths
in regions of low $\rho^2$ density. Finally, the two populations are
almost separated, with a sort of \emph{fingering} remaining in the
region when they are superimposed. Concerning the initial clustering,
we underline the analogy with~\cite{CristianiPiccoliTosin}, where a
population is described through a measure possibly containing also an
atomic part. Concerning the latter fingering, we borrowed here this
term from the entirely different case of, for instance, thin films,
see~\cite{LevyShearer2006}).

\section{Technical Details}
\label{sec:TD}

Below, for any $\rho \in \L1(\reali^d; \reali^n)$,
$\norma{\rho}_{\L1(\reali^d; \reali^n)}$ stands for $\sum_{i=1}^n
\norma{\rho^i}_{\L1(\reali^d; \reali)}$. Similarly, $\tv (\rho) =
\sum_{i=1}^n \tv (\rho^i)$. More generally, if not otherwise stated,
the norm of a vector $v$ in $\reali^n$ is the $1$-norm,
i.e.~$\norma{v} = \sum_{i=1}^n \modulo{v^i}$.
The following constant enters several estimates below:
\begin{equation}
  \label{eq:Wd}
  W_d
  =
  \int_0^{\pi/2} (\cos\theta)^d\d{\theta}\,.
\end{equation}

The analytical tools below are based on the classical Kru\v zkov
work~\cite{Kruzkov}, see also~\cite[Chapter~VI]{DafermosBook}. The
stability estimates are taken from~\cite{ColomboMercierRosini,
  Lecureux}.

First, we study the following Cauchy problem for a scalar non-linear
conservation law:
\begin{equation}
  \label{eq:scalar}
  \left\{
    \begin{array}{l}
      \partial_t \rho
      +
      \div \left(
        q(   \rho )\, \direct (t,x)\right)=0
      \\
      \rho(0) = \rho_o\,.
    \end{array}
  \right.
\end{equation}

\begin{lemma}
  \label{lem:scalar}
  Assume that
  \begin{equation}
    \label{eq:Lemma}
    \begin{array}{rcl}
      q & \in & \C2 (\reali^+; \reali^+)
      \mbox{ satisfies } q(0)=0\,,
      \\
      \direct
      & \in &
      \C0 (\reali^+\times\reali^d;  \reali^d)
      \mbox{ satisfies }
      \left\{
        \begin{array}{l@{}}
          \div  \direct (t) \in \W11(\reali^d; \reali^d)
          \\
          \direct (t) \in \W1\infty (\reali^d; \reali^d)
        \end{array}
        \right.
        \mbox{ for all }t \in \reali^+ \,,
        \\
        \rho_o & \in & (\L1\cap \L\infty) (\reali^d; \reali^+) \,.
      \end{array}
  \end{equation}
  Then, there exists a unique weak entropy solution $\rho\in
  \C0\left(\reali^+,\L1(\reali^d;\reali^+)\right)$
  to~(\ref{eq:scalar}).

  If furthermore $\rho_o \in \BV(\reali^d; \reali)$, then $\rho(t) \in
  \BV (\reali^d; \reali)$ for all $t\in \reali^+$ and, denoting
  $R_T = \norma{\rho(t)}_{\L\infty([0,T]\times\reali^d)}$,
  \begin{equation}
    \label{eq:bv}
    \tv \left(\rho(t)\right)
    \leq
    \tv(\rho_o) e^{\kappa_o t}
    +
    C_d \, e^{\kappa_o t} \norma{q}_{\L\infty([0,R_T])}\int_0^t \int_{\reali^d}
    \norma{\nabla\div \direct(\tau,x)}
    \, \d{x} \, \d{\tau}\,,
  \end{equation}
  where $\kappa_o= (2d +1)\norma{q'}_{\L\infty([0,R_T])} \norma{\nabla
    \direct}_{\L\infty([0,T]\times\reali^d)}$ and  $C_d = d\, W_d$, with $W_d$ defined in~\eqref{eq:bv}.

  Let now $v_1,v_2$, $\direct_1, \direct_2$ and $\rho_{o,1},
  \rho_{o,2}$ satisfy~\eqref{eq:Lemma}. Call $\rho_1, \rho_2$ the
  solutions to
  \begin{equation}
    \label{eq:stab}
    \left\{
      \begin{array}{l@{}}
        \partial_t \rho_1
        +
        \div \left (\rho_1 \, v_1(\rho_1) \, \direct_1(t,x) \right)
        =
        0
        \\
        \rho_1(0) = \rho_{o,1}
      \end{array}
    \right.
    \quad \mbox{and} \quad
    \left\{
      \begin{array}{l@{}}
        \partial_t \rho_2
        +
        \div \left (\rho_2 \, v_2(\rho_2) \, \direct_2(t,x) \right)
        =
        0
        \\
        \rho_2(0) = \rho_{o,2}
      \end{array}
    \right.
  \end{equation}
  Then, renaming
  $R_T=\max\{\norma{\rho_1}_{\L\infty([0,T]\times\reali^d)},
  \norma{\rho_2}_{\L\infty([0,T]\times\reali^d)}\}$,
  \begin{eqnarray*}
    \norma{\rho_1(t) - \rho_2(t)}_{\L1}
    & \leq &
    \norma{\rho_{o,1} - \rho_{o,2}}_{\L1}
    \\
    & &
    +
    t \, e^{\kappa_o t}
    \left(
      \norma{q_2'}_{\L\infty([0,R_T])}
      \norma{\direct_1 - \direct_2}_{\L\infty}
      +
      \norma{q_1'- q_2'}_{\L\infty([0,R_T])}
      \norma{\direct_1}_{\L\infty}
    \right)
    \\
    & &
    \times
    \left[
      \tv(\rho_{o,1})
      +
      C_d \, \norma{q_1}_{\L\infty([0,R_T])}
      \int_0^t \int_{\reali^d}
      \norma{\nabla\div \, \direct_1(\tau, x)}
      \d{x} \d{\tau}
    \right]
    \\
    & &
    +
    \norma{q_1}_{\L\infty([0,R_T])}
    \int_0^t \int_{\reali^d}
    \modulo{\div\left(\direct_1(\tau, x)-\direct_2(\tau, x)\right)}
    \d{x} \d{\tau}
    \\
    & &
    +
    \norma{q_1 - q_2}_{\L\infty([0,R_T])}
    \int_0^t \int_{\reali^d}
    \modulo{\div \direct_2(\tau, x)} \d{x} \d{\tau} \,.
  \end{eqnarray*}
\end{lemma}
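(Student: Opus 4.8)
The plan is to derive the three assertions, in the stated order, from the Kru\v zkov theory for the scalar conservation law with flux $f(t,x,\rho)=q(\rho)\,\direct(t,x)$ and from the quantitative $\L1$--stability estimates of~\cite{ColomboMercierRosini,Lecureux}. This flux fits the Kru\v zkov framework: it is $\C2$ in $\rho$, its $x$--dependence is the multiplicative factor $\direct(t)\in\W1\infty$, and $(\div_x f)(t,x,\rho)=q(\rho)\,\div\direct(t,x)$ lies in $\W11(\reali^d)$ uniformly for $\rho$ in bounded intervals. Since $\direct$ is only continuous in $t$, to fit literally the classical statements one may first mollify it in $t$, solve the regularised problems and pass to the limit via the $\L1$--stability with respect to the flux (a special case of the third assertion); uniqueness is the usual doubling of variables, and globality in time is free because the a priori bounds below are finite on every $[0,T]$. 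For the $\L\infty$ control I would use the quasilinear form $\partial_t\rho+q'(\rho)\,\direct\cdot\nabla\rho=-q(\rho)\,\div\direct$: since $q(0)=0$ and $\div\direct\in\L\infty$ (because $\direct(t)\in\W1\infty$), along the characteristics $\modulo{\rho}$ is dominated by the linear ODE $\dot y\le\norma{q'}_{\L\infty}\,\norma{\div\direct}_{\L\infty}\,y$, so a continuation argument gives $R_T<+\infty$; comparison with the null solution (admissible since $q(0)=0$) gives $\rho(t)\ge0$.

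For the total variation bound~\eqref{eq:bv} I would argue first for smooth $\direct$ and $\rho_o$ (adding a vanishing viscosity if convenient) and then remove the regularisations using the stability already available, the surviving constants being exactly those in~\eqref{eq:bv}. The a priori estimate comes from comparing $\rho(t,\cdot)$ with its translate $\rho(t,\cdot+h\,e)$, $e\in\mathbb{S}^{d-1}$: the translate solves the same equation with $\direct(t,x)$ replaced by $\direct(t,x+h\,e)$, so the third assertion applied with $q_1=q_2=q$ gives
\begin{displaymath}
  \norma{\rho(t,\cdot+h\,e)-\rho(t,\cdot)}_{\L1}
  \le
  e^{\kappa_o t}\,\norma{\rho_o(\cdot+h\,e)-\rho_o}_{\L1}
  +
  e^{\kappa_o t}\,\norma{q}_{\L\infty([0,R_T])}
  \int_0^t\!\!\int_{\reali^d}\modulo{\div\direct(\tau,x+h\,e)-\div\direct(\tau,x)}\,\d x\,\d\tau \,.
\end{displaymath}
Dividing by $h$ and letting $h\to0$, the left side tends to the variation of $\rho(t)$ in the direction $e$ and the integrand to $\norma{e\cdot\nabla\div\direct(\tau,\cdot)}$; integrating over $e\in\mathbb{S}^{d-1}$ and invoking the representation of $\tv$ as a weighted average of directional variations yields~\eqref{eq:bv}, the dimensional constant $C_d=d\,W_d$ with $W_d$ as in~\eqref{eq:Wd} being the one that arises in $\int_{\mathbb{S}^{d-1}}\modulo{e\cdot\omega}\,\d e$.

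The stability estimate I would obtain by running Kru\v zkov's doubling of variables on the two problems~\eqref{eq:stab} at once: testing the two entropy inequalities against a common cut-off family and sending the doubling parameter to $0$ bounds $\norma{\rho_1(t)-\rho_2(t)}_{\L1}$ by $\norma{\rho_{o,1}-\rho_{o,2}}_{\L1}$ plus $\int_0^t\!\!\int_{\reali^d}\modulo{\div_x\bigl(q_1(\rho_1)\direct_1-q_2(\rho_1)\direct_2\bigr)}\,\d x\,\d\tau$; because the flux has the multiplicative form $q(\rho)\direct$ with matching $q$ and $\direct$, the common part does not amplify $\norma{\rho_{o,1}-\rho_{o,2}}_{\L1}$, in accordance with the $\L1$--stability of the single equation, so only the mismatch survives. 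Writing $q_1(\rho_1)\direct_1-q_2(\rho_1)\direct_2=q_1(\rho_1)(\direct_1-\direct_2)+(q_1(\rho_1)-q_2(\rho_1))\direct_2$ and expanding the divergence, the terms carrying $\nabla\rho_1$ are controlled by $\bigl(\norma{q_2'}_{\L\infty}\norma{\direct_1-\direct_2}_{\L\infty}+\norma{q_1'-q_2'}_{\L\infty}\norma{\direct_1}_{\L\infty}\bigr)\tv(\rho_1(\tau))$ --- the precise matching of $q_i'$ with $\direct_j$ reflecting the asymmetric roles of the two solutions in the doubling --- and the rest by $\norma{q_1}_{\L\infty}\int_{\reali^d}\modulo{\div(\direct_1-\direct_2)}\,\d x+\norma{q_1-q_2}_{\L\infty}\int_{\reali^d}\modulo{\div\direct_2}\,\d x$; inserting~\eqref{eq:bv} for $\tv(\rho_1(\tau))$ and integrating in $\tau$ brings out the factor $t\,e^{\kappa_o t}$ and the bracket $\bigl[\tv(\rho_{o,1})+C_d\,\norma{q_1}_{\L\infty}\int_0^t\!\!\int_{\reali^d}\norma{\nabla\div\direct_1}\,\d x\,\d\tau\bigr]$, as claimed. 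The hard part is exactly this last step --- organising the doubling so that the common flux produces no exponential amplification of $\norma{\rho_{o,1}-\rho_{o,2}}_{\L1}$ and keeping exact track of which norm multiplies which divergence --- and getting the constants $\kappa_o$ and $C_d$ sharp is where essentially all the work goes; this is what is taken from~\cite{ColomboMercierRosini,Lecureux}.
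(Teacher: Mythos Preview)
Your overall framework---check that $f(t,x,\rho)=q(\rho)\,\direct(t,x)$ fits the Kru\v zkov setting, then obtain the $\BV$ bound and the stability estimate from the quantitative results in~\cite{ColomboMercierRosini,Lecureux}---is exactly what the paper does. The paper's proof is in fact purely a verification of hypotheses: it checks that $\partial_\rho f$, $\div_x f$ and $\partial_\rho\div_x f$ are bounded so that~\cite[Theorem~4]{Kruzkov} applies for existence, notes $q(0)=0$ to get positivity from the maximum principle, and then invokes~\cite[Theorem~2.2]{Lecureux} and~\cite[Theorem~2.6]{Lecureux} as black boxes for~\eqref{eq:bv} and the stability estimate respectively. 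No independent derivation of those two results is attempted.

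Where your sketch differs is that you try to re-derive the $\BV$ bound via the translation trick, and here there is a genuine gap. You claim that ``the third assertion applied with $q_1=q_2=q$'' yields your displayed inequality, but it does not: the lemma's stability estimate has coefficient~$1$ (not $e^{\kappa_o t}$) in front of $\norma{\rho_{o,1}-\rho_{o,2}}_{\L1}$, it has no factor $e^{\kappa_o t}$ in front of the $\int\modulo{\div(\direct_1-\direct_2)}$ term, and---most importantly---it carries an additional term
\[
t\,e^{\kappa_o t}\,\norma{q'}_{\L\infty}\,\norma{\direct_1-\direct_2}_{\L\infty}\,
\Bigl[\tv(\rho_{o,1})+C_d\,\norma{q}_{\L\infty}\!\int_0^t\!\!\int_{\reali^d}\norma{\nabla\div\direct_1}\Bigr]
\]
which you have silently dropped; after dividing by $h$ this term does not vanish and would spoil the exact form~\eqref{eq:bv}. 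More seriously, the argument as written is circular: you use the third assertion to prove~\eqref{eq:bv}, and then (``inserting~\eqref{eq:bv} for $\tv(\rho_1(\tau))$'') use~\eqref{eq:bv} to prove the third assertion. The way this circle is actually broken in~\cite{Lecureux} is that the doubling of variables first produces a stability inequality whose right-hand side contains $\int_0^t\tv\bigl(\rho_1(\tau)\bigr)\d\tau$ rather than the final bracket; the translation trick applied to \emph{that} intermediate inequality yields a Gronwall inequality for $t\mapsto\tv\bigl(\rho(t)\bigr)$, whose solution is~\eqref{eq:bv}; only afterwards is~\eqref{eq:bv} substituted back to obtain the stability estimate in its stated form. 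Your sketch collapses these two stages and loses the logical order.
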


\begin{proof}
  We consider the following steps separately.

  \paragraph{Existence.}
  Let $f(t,x,\rho) = q(\rho) \, \direct(t,x)$. Then, we have $f\in
  \C0(\reali^+\times\reali^d\times \reali; \reali^d)$ and $f(t, \,
  \cdot\, , \, \cdot \, ) \in \C2(\reali^d\times \reali; \reali^d)$
  for any $t \in \reali^+$. Through elementary computations the
  following implications can be checked:
  \begin{displaymath}
    \begin{array}{rcl@{\quad\Rightarrow\quad}rcl}
      \partial_\rho f (t,x,\rho)
      & = &
      q'(\rho) \, \direct(t,x)
      &
      \partial_\rho f
      & \mbox{is}& \mbox{bounded on }[0,T] \times \reali^d \times [-A,A]
      \\
      \div f (t,x,\rho)
      & = &
      q(\rho)\, \div \direct (t, x)
      &
      \div f
      & \in &
      \L\infty([0,T] \times \reali^d\times [-A,A], \reali)
      \\
      \partial_\rho \div f (t, x, \rho)
      & = &
      q'(\rho) \,\div \direct (t, x)
      &
      \partial_\rho \div f
      & \in &
      \L\infty([0,T]\times \reali^d\times [-A,A])
    \end{array}
  \end{displaymath}
  for all $A \in \reali^+$. Hence, \cite[Theorem~4]{Kruzkov} can be
  applied and the existence of solutions follows.

  \paragraph{Maximum principle.}
  Since $q(0)=0$, $\rho\equiv 0$ is solution to~(\ref{eq:scalar}). The
  maximum principle of Kru\v zkov~\cite[Theorem~3]{Kruzkov} then
  ensures that $\rho_o\geq 0$ implies $\rho(t)\geq 0$ for all $t\geq
  0$.

  \paragraph{$\BV$ bound.}
  The $\BV$ bound follows from~\cite[Theorem~2.2]{Lecureux}, which can
  be applied since $\nabla\partial_\rho f = q'(\rho) \, \nabla \direct
  (t,x)$, so that $\nabla\partial_\rho f \in
  \L\infty([0,T]\times\reali^d\times[-A, A])$. Moreover, for any $A\geq 0$,
  \begin{displaymath}
    \int_0^T \!\!\! \int_{\reali^d} \!
    \norma{\nabla \div f(t,x, \, \cdot \,)}_{\L\infty([-A,A])} \d{x} \d{t}
    \!=\!
    \norma{q(\rho)}_{\L\infty([-A,A])}
    \int_0^T \!\!\! \int_{\reali^d} \!
    \norma{\nabla\div  \vec v (t,x)} \d{x} \d{t}
    {<}
    {+}\infty.
  \end{displaymath}
  Let $\kappa_o = (2 d+1)\norma{q'}_{\L\infty([0,R_T])} \norma{\nabla
    \direct}_{\L\infty([0,T]\times\reali^d)}$. Then, for any $t\geq
  0$, we have:
  \begin{displaymath}
    \tv \left( \rho(t) \right)
    \leq
    \tv(\rho_o) e^{\kappa_o t}
    +
    C_d \, e^{\kappa_o t} \norma{q}_{\L\infty([0,R_T])}
    \int_0^t \int_{\reali^d}
    \norma{\nabla\div \, \direct(\tau,x)} \d{x} \d{\tau} \,,
  \end{displaymath}
  with $C_d = d \, W_d$, and $W_d$ as defined in~\eqref{eq:Wd}.

  \paragraph{Stability estimate.}
  The stability estimate follows from~\cite[Theorem~2.6]{Lecureux}. We
  have, for any $A\geq 0$,
  \begin{eqnarray*}
    & &
    \int_0^T \int_{\reali^d}
    \norma{\div (f_1 - f_2)(t,x, \, \cdot \,)}_{\L\infty([-A,A])} \d{x} \d{t}
    \\
    & = &
    \norma{q_1 (\rho_1)}_{\L\infty([-A,A])}
    \int_0^T \int_{\reali^d}
    \modulo{\div \direct_1(\tau,x) - \div\direct_2(\tau, x)}
    \d{x} \d{t}
    \\
    & &
    +
    \norma{q_1 - q_2 }_{\L\infty([-A,A])}
    \int_0^T \int_{\reali^d}
    \modulo{\div\direct_2(\tau, x)} \d{x} \d{t}
    <
    +\infty \,.
  \end{eqnarray*}
  Then, with
  $R_T=\max\{\norma{\rho_1}_{\L\infty([0,T]\times\reali^d)},
  \norma{\rho_2}_{\L\infty([0,T]\times\reali^d)}\}$, we get
  \begin{eqnarray*}
    \norma{\rho_1(t) - \rho_2(t)}_{\L1}
    & \leq &
    \norma{\rho_{o,1} - \rho_{o,2}}_{\L1}
    \\
    & &
    +
    t \, e^{\kappa_o t}
    \left(
      \norma{q_2'}_{\L\infty([0,R_T])}
      \norma{\direct_1 - \direct_2}_{\L\infty}
      +
      \norma{q_1' - q_2'}_{\L\infty([0,R_T])} \norma{ \direct_1 }_{\L\infty}
    \right)
    \\
    &&
    \quad  \times
    \left[
      \tv(\rho_{o,1})
      +
      C_d \, \norma{q_1}_{\L\infty([0,R_T])}
      \int_0^t \int_{\reali^d}
      \norma{\nabla\div \, \direct_1(\tau, x)} \d{x} \d{\tau}
    \right]
    \\
    & &
    + \norma{q_2}_{\L\infty([0,R_T])}
    \int_0^t\int_{\reali^d}
    \modulo{\div \left(\direct_1(\tau, x) - \direct_2(\tau, x) \right)}
    \d{x} \d{\tau}
    \\
    & &
    +
    \norma{q_1 - q_2}_{\L\infty([0,R_T])}
    \int_0^t \int_{\reali^d}
    \modulo{\div\direct_1(\tau, x)} \d{x} \d{\tau} \,,
  \end{eqnarray*}
  completing the proof.
\end{proof}

\begin{corollary}\label{cor:scalar}
  In the same setting as Lemma~\ref{lem:scalar}, if
  $q(\rho)=q_1(\rho)=q_2(\rho)=\rho$ then $q'(\rho)=1$, $q''(\rho)=0$
  and the stability estimate reduces to
  \begin{eqnarray*}
    \norma{\rho_1(t) - \rho_2(t)}_{\L1}
    & \leq &
    \norma{\rho_{o,1} - \rho_{o,2}}_{\L1}
    \\
    & &
    +
    t \, e^{\kappa_o t}
    \norma{\direct_1 - \direct_2}_{\L\infty}
    \\
    &&
    \quad  \times
    \left[
      \tv(\rho_{o,1})
      +
      C_d \norma{\rho_1}_{\L\infty([0,T]\times\reali^d)}
      \int_0^t \int_{\reali^d}
      \norma{\nabla\div \, \direct_1(\tau, x)} \d{x} \d{\tau}
    \right]
    \\
    & &
    +
    \norma{\rho_2}_{\L\infty([0,T]\times\reali^d)}
    \int_0^t \int_{\reali^d}
    \modulo{\div \left(\direct_1(\tau, x) - \direct_2(\tau, x) \right)}
    \d{x} \d{\tau}
    \,.
  \end{eqnarray*}
\end{corollary}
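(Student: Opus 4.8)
The plan is to obtain Corollary~\ref{cor:scalar} as nothing more than a bookkeeping specialization of the stability estimate already proved in Lemma~\ref{lem:scalar}; no new analytical ingredient is needed. First I would check that taking $q = q_1 = q_2 = \Id$ is admissible: the map $\rho \mapsto \rho$ obviously satisfies the first line of~\eqref{eq:Lemma}, with $q(0) = 0$, $q'(\rho) = 1$ and $q''(\rho) = 0$ for every $\rho$, so Lemma~\ref{lem:scalar} applies to the pair of Cauchy problems~\eqref{eq:stab} and the stability estimate at the end of its statement holds.

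Next I would substitute these values into that estimate and simplify term by term. In the coefficient $t\,e^{\kappa_o t}\bigl(\norma{q_2'}_{\L\infty([0,R_T])}\norma{\direct_1-\direct_2}_{\L\infty} + \norma{q_1'-q_2'}_{\L\infty([0,R_T])}\norma{\direct_1}_{\L\infty}\bigr)$ one has $\norma{q_2'}_{\L\infty([0,R_T])} = 1$ and $q_1' - q_2' \equiv 0$, so it collapses to $t\,e^{\kappa_o t}\norma{\direct_1-\direct_2}_{\L\infty}$. Similarly $q_1 - q_2 \equiv 0$ annihilates the last line $\norma{q_1-q_2}_{\L\infty([0,R_T])}\int_0^t\!\!\int_{\reali^d}\modulo{\div\direct_2}$ of the Lemma's estimate. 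What remains is exactly the displayed inequality, provided the surviving occurrences of $\norma{q_1}_{\L\infty([0,R_T])}$ and of the analogous flux norm are read as $\norma{\rho_1}_{\L\infty([0,T]\times\reali^d)}$ and $\norma{\rho_2}_{\L\infty([0,T]\times\reali^d)}$, respectively.

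The only point that deserves a line of justification, and the closest thing here to an obstacle, is precisely this last reading. In the \emph{statement} of Lemma~\ref{lem:scalar} those terms carry $\norma{q_1}_{\L\infty([0,R_T])}$, which for $q_1 = \Id$ would be $R_T = \max\{\norma{\rho_1}_{\L\infty},\norma{\rho_2}_{\L\infty}\}$; but inside the \emph{proof} of Lemma~\ref{lem:scalar} the quantities that genuinely enter the $\BV$ bound and the stability estimate borrowed from~\cite{Lecureux} are the compositions $\norma{q_1(\rho_1)}_{\L\infty}$ (in the bracket, next to $\tv(\rho_{o,1})$) and $\norma{q_2(\rho_2)}_{\L\infty}$ (multiplying $\int_0^t\!\!\int_{\reali^d}\modulo{\div(\direct_1-\direct_2)}$), each of which is bounded there by $\norma{q_j}_{\L\infty([0,R_T])}$ only as a convenient relaxation. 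Keeping the sharper form and using $q_j = \Id$ turns these into $\norma{\rho_1}_{\L\infty([0,T]\times\reali^d)}$ and $\norma{\rho_2}_{\L\infty([0,T]\times\reali^d)}$, and collecting the surviving terms then finishes the proof.
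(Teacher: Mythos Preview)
Your proof is correct and matches the paper's approach: the paper gives no separate proof of the Corollary, treating it as an immediate specialization of Lemma~\ref{lem:scalar}, which is exactly what you do. Your identification of the one genuinely nontrivial point --- that the literal statement of Lemma~\ref{lem:scalar} yields $\norma{q_1}_{\L\infty([0,R_T])} = R_T$ rather than the sharper constants $\norma{\rho_1}_{\L\infty}$ and $\norma{\rho_2}_{\L\infty}$ appearing in the Corollary, and that one must return to the proof (or directly to~\cite{Lecureux}) to recover the compositions $\norma{q_j(\rho_j)}_{\L\infty}$ --- is accurate and more careful than the paper itself, which simply records the specialized inequality without comment.
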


\subsection{Proofs related to Section~\ref{sec:1}}
\label{subsec:TD1}

\begin{proofof}{Theorem~\ref{thm:main1}}
  \textbf{1.}  Let $\rho_o \in (\L1 \cap \L\infty \cap \BV) (\reali^d;
  \reali^n)$ and let $N_1=\norma{\rho_o}_{\L1}$. Let us introduce a
  given time $T>0$ and the sphere
  \begin{displaymath}
    \mathcal{X}_{N_1}
    =
    \left\{
      \rho \in \C0\left([0,T]; \L1(\reali^d; \reali^n) \right) \colon
      \textrm{ for all } t \in [0,T]\,,\;
      \norma{\rho(t)}_{\L1} \leq N_1
    \right\}\,,
  \end{displaymath}
  equipped with the distance induced by the norm $\norma{\, \cdot
    \,}_{\L\infty([0,T]; \L1(\reali^d; \reali^n))}$.  Consider first a
  fixed $i\in \{1, \ldots, n\}$. Choose any $r_1, r_2 \in
  \mathcal{X}_{N_1}$ and denote $\rho^i_1, \rho^i_2 \in
  \L\infty\left([0,T]; \L1(\reali^d; \reali)\right)$ the solutions of
  the Cauchy problems~(\ref{eq:scalar}) with respectively, for $k\in
  \{1, 2\}$,
  \begin{equation}
    \label{eq:choix1}
    q_k(\rho) = \rho
    \,,
    \quad
    \direct_k (t,x) = v^i(r^1_k * \eta^1 + \ldots + r^n_k * \eta^n)\,.
  \end{equation}
  Thanks to the properties of $v^i\in \C2$ and $\eta$ the hypotheses
  on $\direct_k$ in Lemma~\ref{lem:scalar} are satisfied. Hence, we
  get existence and uniqueness of a weak entropy solution, Besides,
  the $\L1$ bound on the solution
  $\norma{\rho(t)}_{\L1}=\norma{\rho_o}_{\L1}$ for any $t\geq 0$
  ensures that the solution of~(\ref{eq:scalar})-(\ref{eq:choix1})
  belongs to $\mathcal{X}_{N_1}$.

  Noting furthermore that
  \begin{eqnarray*}
    \direct^i & \in &
    \L\infty([0,T]\times \reali^d; \reali^d)\,,
    \\
    \nabla \direct^i & \in &
    \L\infty ([0,T]\times \reali^d; \reali^{d\times d})
    \cap
    \L\infty \left([0,T]; \L1(\reali^d; \reali^{d\times d}) \right)\,,
    \\
    \nabla^2 \direct^i & \in &
    \L\infty \left([0,T]; \L1(\reali^d; \reali^{d\times d\times d})\right)
  \end{eqnarray*}
  we can apply~\cite[Corollary~5.2 and Lemma~5.3]{ColomboHertyMercier}
  or more directly~\cite[Theorem~2.2 and Theorem~2.6]{Lecureux}.

  Remark that $N_1\geq \sup_{t\in [0,T]}\norma{ r(t)}_{\L1}$, so that
  we have the estimate
  \begin{eqnarray*}
    \norma{\div \direct^i(t)}_{\L\infty}
    &\leq &
    \norma{(v^i)'}_{\L\infty} \norma{r(t)}_{\L1} \norma{\nabla\eta}_{\L\infty}
    \norma{\vec v^i}_{\L\infty}
    +
    \norma{v^i}_{\L\infty} \norma{\nabla \vec v^i}_{\L\infty}
    \\
    &\leq &
    \norma{v^i}_{\W1\infty} \norma{\vec v^i}_{\W1\infty}
    (N_1 \norma{\nabla\eta}_{\L\infty} +1) \,.
  \end{eqnarray*}
  Thus, with
  \begin{equation}
    \label{eq:K1}
    K_1
    =
    \norma{v}_{\W1\infty}\norma{\vec  v}_{\W1\infty}
    (N_1 \norma{\nabla\eta}_{\L\infty} + 1)\,,
  \end{equation}
  we obtain by~\cite[Corollary~5.2]{ColomboHertyMercier}
  \begin{displaymath}
    \norma{\rho^i(t)}_{\L\infty}
    \leq
    \norma{\rho_o^i}_{\L\infty} e^{K_1 t}\,.
  \end{displaymath}
  Summing over $i = 1, \ldots, n$, we obtain
  \begin{displaymath}
    \norma{\rho(t)}_{\L\infty}
    \leq
    \norma{\rho_o}_{\L\infty}e^{K_1 t}\,.
  \end{displaymath}
  We want now to apply Lemma~\ref{lem:scalar}
  or~\cite[Theorem~2.2]{Lecureux}. Note that, with the notations
  of~\cite{Lecureux}, we have
  \begin{eqnarray*}
    \kappa_o
    & = &
    (2d+1) \norma{\nabla \direct^i}_{\L\infty}\leq (2d+1)K_1\,,
    \\
    \norma{\nabla \div \direct^i}_{\L1}
    &\leq &
    \norma{(v^i)''}_{\L\infty}
    \left(
      \sum_{j=1}^n \norma{\rho^j}_{\L1} \norma{\nabla\eta^j}_{\L\infty}
    \right)^2
    \norma{\vec v^i}_{\L1}
    \\
    & &
    +
    \norma{(v^i)'}_{\L\infty}
    \sum_j \norma{\rho^j}_{\L1} \norma{\nabla^2 \eta^j}_{\L\infty}
    \norma{ \vec v^i}_{\L1}
    \\
    & &
    +
    2\norma{(v^i)'}_{\L\infty} \sum_j \norma{\rho^j}_{\L1}
    \norma{\nabla \eta^j}_{\L\infty} \norma{\nabla \vec v^i}_{\L1}
    +
    \norma{v^i}_{\L\infty}\norma{\nabla^2 \vec v^i}_{\L1}
    \\
    & \leq &
    \norma{v^i}_{\W2\infty} \norma{\vec v^i}_{\W21}
    \left(
      N_1^2 \norma{\nabla\eta}_{\L\infty}^2
      +
      2N_1\norma{\nabla\eta}_{\W1\infty} +1
    \right)\,.
  \end{eqnarray*}
  Denoting
  \begin{equation}
    \label{eq:K2}
    K_2
    =
    \norma{v}_{\W1\infty}  \, \norma{\vec v}_{\W11}
    \left(
      N_1^2 \, \norma{\nabla\eta}_{\L\infty}^2
      +
      2 \, N_1 \, \norma{\nabla\eta}_{\W1\infty}
      +
      1
    \right)\,,
  \end{equation}
  we obtain
  \begin{displaymath}
    \tv\left( \rho^i_k(t) \right)
    \leq
    \tv( \rho^i_o ) e^{(2d+1)K_1 t}
    +
    t e^{K_1 t} d W_d K_2 \norma{\rho^i_o}_{\L\infty}\,.
  \end{displaymath}
  Let us now apply Corollary~\ref{cor:scalar}
  or~\cite[Theorem~2.6]{Lecureux}. We obtain
  \begin{displaymath}
    \norma{\rho^i_1-\rho^i_2}_{\L1}
    \leq
    K t e^{Kt} \norma{r_1 - r_2}_{\L\infty([0,T]; \L1)}
    \left[
      \tv(\rho^i_o)
      +
      (t\,d W_d K_2 + 2+N_1 \norma{\nabla \eta}_{\L\infty})
      \norma{\rho^i_o}_{\L\infty}
    \right]
  \end{displaymath}
  where
  \begin{displaymath}
    K
    =
    \norma{v'}_{\W1\infty} \,
    \norma{\eta}_{\W1\infty} \,
    (\norma{\vec v}_{\L\infty}+\norma{\vec v}_{\W11})\,.
  \end{displaymath}
  Note that $K_1$, $K_2$ and $K$ are constants depending on
  $\norma{v}_{\W2\infty}$, $\norma{\vec v}_{\L\infty}$, $\norma{\vec
    v}_{\W21}$, $\norma{\eta}_{\W2\infty}$, $N_1$ and $d$ and not on
  the initial condition $\rho_o$.  We do not need here precision on
  the constant so, with a $C$ large enough, not depending on $\rho_o$
  and $T$, we have, denoting $F(t) = C \, t \, e^{ C t}
  \left(\tv(\rho_o) +C(t+1) \, \norma{\rho_o}_{\L\infty} \right)$ and
  summing over $i = 1, \ldots, n$,
  \begin{eqnarray*}
    \norma{\rho(t)}_{\L\infty}
    & \leq &
    \norma{\rho_o}_{\L\infty}e^{Ct}\,,
    \\
    \tv \left( \rho(t) \right)
    & \leq &
    \tv(\rho_o) e^{Ct}
    +
    C t e^{Ct} \norma{\rho_o}_{\L\infty}\,,
    \\
    \norma{(\rho_1 - \rho_2)(t)}_{\L1}
    & \leq &
    F(t) \, \norma{r_1 - r_2}_{\L\infty([0,T]; \L1)}\,.
  \end{eqnarray*}
  Choose $T$ so that $F(T)=1/2$. Then, the map $\mathcal{Q}\colon
  \mathcal{X}_{N_1} \to \mathcal{X}_{N_1}$, defined by $\mathcal{Q}
  (r) = \rho$, is a contraction. Banach Fixed Point Theorem ensures
  local in time existence and uniqueness of weak entropy solutions
  to~\eqref{eq:SCL}--\eqref{eq:2}.

  In order to get global in time existence, we iterate the previous
  procedure. Starting from time $T_n$, we obtain
  \begin{eqnarray*}
    \norma{(\rho_1 - \rho_2) (t)}_{\L1}
    & \leq &
    \norma{r_1 - r_2}_{\L\infty([0,t]; \L1)} C (t-T_n) e^{C (t-T_n)}
    \\
    & &
    \times
    \left(
      \tv( \rho_o ) e^{CT_n} +CT_n e^{CT_n} \norma{\rho_o}_{\L\infty}
      +
      C \left( 1 +  t-T_n\right) e^{C T_n} \norma{\rho_o}_{\L\infty}
    \right).
  \end{eqnarray*}
  Iteratively, we choose $T_{n+1}>T_n$ such that
  \begin{displaymath}
    C\, (T_{n+1}-T_n) \, e^{CT_{n+1}} \,
    \left(
      \tv( \rho_o )
      +
      C \left(1 +  T_{n+1} )\right)  \,
      \norma{\rho_o}_{\L\infty}
    \right)
    =
    \frac{1}{2}\,.
  \end{displaymath}
  Since the sequence $T_n$ grows to $+\infty$, we have global in time
  existence, proving point~1.

  \textbf{2.}  The positivity of the solution directly follows from
  the fact that $\rho^i \equiv 0$ is a solution and from Kru\v zkov
  Maximum Principle~\cite[Theorem~3]{Kruzkov}.

  \textbf{3.} To prove these estimates, we
  apply~\cite[Corollary~5.2]{ColomboHertyMercier}
  and~\cite[Theorem~2.2 and Theorem~2.6]{Lecureux} to each equation of
  the system, which is possible since the coupling is only present in
  the nonlocal term. Let $i\in \{1, \ldots, n\}$. Denote $K_1$ as
  in~(\ref{eq:K1}), $N_1=\norma{\rho_o}_{\L1}=\sum_{j=1}^n
  \norma{\rho_o^j}_{\L1}$ and $\direct^i= v^i(\rho^1 * \eta^1 +
  \ldots + \rho^n * \eta^n) \, \vec v^i(x)$, we have
  \begin{displaymath}
    \norma{\rho(t)}_{\L\infty}
    \leq
    \norma{\rho_o}_{\L\infty}e^{K_1 t}\,.
  \end{displaymath}

  To prove the estimate on the total variation,
  apply~\cite[Theorem~2.2]{Lecureux}.  Denoting $K_2$ as
  in~(\ref{eq:K2}), we obtain
  \begin{displaymath}
    \tv \left( \rho^i(t) \right)
    \leq
    e^{(2d+1)K_1 t} \tv(\rho_o^i)
    +
    t e^{(2d+1)K_1 t} d W_d K_2 \norma{\rho_o^i}_{\L\infty}\,.
  \end{displaymath}
  Summing over $i = 1, \ldots, n$ we obtain the desired bound on $\tv
  \left( \rho(t) \right) = \sum_{i=1}^n \tv \left( \rho^i(t) \right)$.

  \textbf{4.} To prove the $\L1$ stability of the solution with
  respect to initial conditions and parameters, we
  apply~\cite[Theorem~2.10]{Lecureux}. Denote by $\rho_k^i$, for $k=1,
  2$, the solutions to the Cauchy problems
  \begin{displaymath}
    \left\{
      \begin{array}{l}
        \partial_t \rho^i_k +\div \left(\rho^i_k \, \direct^i_k (t,x)\right) = 0\,,
        \\
        \rho^i_k(0, \, \cdot \, ) = \rho^i_{o,k} \,.
      \end{array}
    \right.
    \quad \mbox{ where } \quad
    \direct^i_k(t,x)
    =
    v_k^i(\rho^1_k * \eta_k^1  + \ldots + \rho^n_k * \eta_k^n)
    \, \vec v_k^i(x)\,.
  \end{displaymath}
  Note that $\div \direct^i_k \in \L\infty \left([0,T]; \L1(\reali^d;
    \reali^d) \right)$ for $k=1, 2$, so that the necessary hypotheses
  hold. Moreover,
  \begin{eqnarray*}
    & &
    \norma{\div (\direct^i_1-\direct^i_2)(t)}_{\L1}
    \\
    & \leq &
    \norma{(v^i_1)'-(v^i_2)'}_{\L\infty}\norma{\vec v^i_1}_{\L1}
    \norma{\rho_{o,1}}_{\L1}
    \norma{\nabla\eta_1}_{\L\infty}
    +
    \norma{v^i_1-v^i_2}_{\L\infty} \norma{\div \vec v^i_1}_{\L1}
    \\
    & &
    +
    \left(
      \norma{(\rho_1-\rho_2)(t)}_{\L1} \norma{\eta_1}_{\L\infty}
      +
      \norma{\rho_{o,2}}_{\L1} \norma{\eta_1-\eta_2}_{\L\infty}
    \right)
    \\
    & &
    \qquad
    \times
    \left[   \norma{\vec v^i_1}_{\L1} \norma{(v^i_1)''}_{\L\infty}
      \norma{\rho_{o,1}}_{\L1}
      \norma{\nabla\eta_1}_{\L\infty}
      +
      \norma{\div\vec v^i_1}_{\L1} \norma{(v^i_1)'}_{\L\infty}
    \right]
    \\
    & &
    +
    \left(
      \norma{(\rho_1-\rho_2)(t)}_{\L1}
      \norma{\nabla\eta_1}_{\L\infty}
      +
      \norma{\rho_{o,2}}_{\L1}
      \norma{\nabla\eta_1 - \nabla\eta_2}_{\L\infty}
    \right)   \norma{\vec v^i_1}_{\L1} \norma{(v^i_2)'}_{\L\infty}
    \\
    & &
    +
    \norma{\div (\vec v^i_1-\vec v^i_2)}_{\L1} \norma{v^i_2}_{\L\infty}
    + \norma{\vec v^i_1-\vec v^i_2}_{\L1}
    \norma{(v^i_2)'}_{\L\infty} \norma{\rho_{o,2}}_{\L1}
    \norma{\nabla\eta_2}_{\L\infty}  \,,
  \end{eqnarray*}
  and
  \begin{eqnarray*}
    & &
    \norma{(\direct^i_1-\direct^i_2)(t)}_{\L\infty}
    \\
    & \leq &
    \norma{v^i_1-v^i_2}_{\L\infty} \norma{\vec v^i_1}_{\L\infty}
    +
    \norma{v^i_2}_{\L\infty} \norma{\vec v^i_1-\vec v^i_2}_{\L\infty}
    \\
    & &
    +
    \left(
      \norma{(\rho_1-\rho_2)(t)}_{\L1}
      \norma{\eta_1}_{\L\infty}
      +
      \norma{\rho_{o,2}}_{\L1}
      \norma{\eta_1-\eta_2}_{\L\infty}
    \right)
    \norma{(v^i_1)'}_{\L\infty}\norma{\vec v^i_1}_{\L\infty}\,.
  \end{eqnarray*}
  Let us introduce
  \begin{eqnarray*}
    \alpha
    & = &
    \norma{v_1''}_{\L\infty} \norma{\eta_1}_{\L\infty} \norma{\rho_{o,1}}_{\L1}
    \norma{\nabla \eta_1}_{\L\infty} \norma{\vec v_1}_{\L1}
    +
    \norma{v_2'}_{\L\infty} \norma{\nabla\eta_1}_{\L\infty} \norma{\vec v_1}_{\L1}
    \\
    & &
    +
    \norma{v_1'}_{\L\infty} \norma{\eta_1}_{\L\infty} \norma{\div \vec v_1}_{\L1}\,,
    \\
    \beta
    & = &
    \norma{v_1''}_{\L\infty} \norma{\rho_{o,2}}_{\L1} \norma{\rho_{o,1}}_{\L1}
    \norma{\nabla \eta_1}_{\L\infty} \norma{\vec v_1}_{\L1}
    +
    \norma{v_1'}_{\L1} \norma{\rho_{o,2}}_{\L1} \norma{\div \vec v_1}_{\L1}
    \\
    & &
    +
    \norma{v_2'}_{\L\infty} \norma{\rho_{o,2}}_{\L1} \norma{\vec v_1}_{\L1}\,,
    \\
    \gamma
    & = &
    \norma{\div \vec v_1}_{\L1}
    +
    \norma{\rho_{o,1}}_{\L1} \norma{\nabla \eta_1}_{\L\infty}
    \norma{\vec v_1}_{\L1}\,,
    \\
    \delta
    & = &
    \norma{v_2'}_{\L\infty} \norma{\rho_{o,2}}_{\L1} \norma{\nabla\eta_2}_{\L\infty}
    +
    \norma{v_2}_{\L\infty}\,,
  \end{eqnarray*}
  and
  \begin{displaymath}
    \alpha' = \norma{v_1'}_{\L\infty}\norma{\eta_1}_{\L\infty}
    \,,\quad
    \beta' = \norma{v_1'}_{\L\infty} \norma{\rho_{o,2}}_{\L1}
    \norma{\vec v_1}_{\L\infty}
    \,,\quad
    \gamma' = \norma{\vec v_1}_{\L\infty}
    \,,\quad
    \delta' = \norma{v_2}_{\L\infty}\,,
  \end{displaymath}
  these coefficients being chosen so that
  \begin{eqnarray*}
    \norma{\div (\direct^i_1-\direct^i_2)(\tau)}_{\L1}
    &\leq &
    \alpha \norma{(\rho_1-\rho_2)(\tau)}_{\L1}
    +
    \beta \norma{\eta_1-\eta_2}_{\W1\infty}
    +
    \gamma \norma{v_1-v_2}_{\W1\infty}
    \\
    & &
    +
    \delta \norma{\vec v_1-\vec v_2}_{\W11}\,,
    \\
    \norma{(\direct^i_1-\direct^i_2)(\tau)}_{\L\infty}
    & \leq &
    \alpha' \norma{(\rho_1-\rho_2)(\tau)}_{\L1}
    +
    \beta' \norma{\eta_1-\eta_2}_{\L\infty}
    +
    \gamma' \norma{v_1-v_2}_{\L\infty}
    \\
    & &
    +
    \delta' \norma{\vec v_1-\vec v_2}_{\L\infty} \,.
  \end{eqnarray*}
  Hence, we get
  \begin{eqnarray*}
    & &
    \norma{(\rho_1^i-\rho_2^i)(t)}_{\L1}
    \\
    & \leq &
    \norma{\rho_{o,1}^i-\rho_{o,2}^i}_{\L1}
    +
    e^{(2d+1)K_1 t}
    \left[
      \tv(\rho_{o,1})
      +
      t dW_d K_2 \norma{\rho_{o,1}}_{\L\infty}
    \right]
    \int_0^t \norma{(\direct^i_1-\direct^i_2)(\tau)}_{\L\infty} \d{\tau}
    \\
    & &
    +
    \max
    \left\{
      \norma{\rho_{1}(t)}_{\L\infty},
      \norma{\rho_{2}(t)}_{\L\infty}
    \right\}
    \int_0^t
    \norma{\div (\direct^i_1-\direct^i_2)(\tau)}_{\L1} \d{\tau}
    \\
    &\leq &
    \norma{\rho_{o,1}^i-\rho_{o,2}^i}_{\L1}
    +
    A_\rho(t) \int_0^t \norma{(\rho_1^i-\rho_2^i)(\tau)}_{\L1}\d{\tau}
    +
    t \, A_v(t)\norma{v_1-v_2}_{\W1\infty}
    \\
    & &
    +
    t \, A_\eta(t) \norma{\eta_1-\eta_2}_{\W1\infty}
    +
    t \, A_{\vec v}(t)
    \left(
      \norma{\vec v_1-\vec v_2}_{\W11}
      +
      \norma{\vec v_1-\vec v_2}_{\L\infty}
    \right) \,,
  \end{eqnarray*}
  where $A_\rho$, $A_v$, $A_\eta$, $A_{\vec v}$ are smooth, positive,
  and increasing, functions of $t$ depending on $d$,
  $\norma{v}_{\W2\infty}$, $\norma{\vec v}_{\W11}$, $\norma{\vec
    v}_{\L\infty}$, $\norma{\eta}_{\W1\infty}$, $\tv (\rho_{o,1})$,
  $\max \left\{ \norma{\rho_{o,1}}_{\L\infty},
    \norma{\rho_{o,2}}_{\L\infty} \right\}$,
  $\norma{\rho_{o,1}}_{\L1}$ and $\norma{\rho_{o,2}}_{\L1}$.  More
  precisely, denoting
  \begin{eqnarray*}
    f(t)
    & = &
    e^{(2d +1)K_1}\left(\tv(\rho_{o,1})
      +
      t d W_d K_2 \norma{\rho_{o,1}}_{\L\infty}\right)\,,
    \\
    R
    & = &
    \max (\norma{\rho_{o,1}}_{\L\infty},\norma{\rho_{o,2}}_{\L\infty})\,,
    \\
    K
    & = &
    \max_{k\in \{ 1, 2\} } \left\{
      \norma{v_k}_{\W1\infty} \norma{\vec v_k}_{\W1\infty}
      ( \norma{\rho_{o,k}}_{\L1} \norma{\nabla\eta_k}_{\L\infty}  +1)
    \right\} \,,
    \\
  \end{eqnarray*}
  we have
  \begin{displaymath}
    \begin{array}{rcl@{\qquad}rcl}
      A_\rho(t) & = & \alpha' f(t)+\alpha Re^{Kt}\,, &
      A_\eta(t) & = & \beta' f(t)+\beta Re^{Kt}\,,
      \\
      A_v(t) & = & \gamma' f(t)+\gamma Re^{Kt}\,,&
      A_{\vec v}(t) & = & \delta' f(t)+\delta Re^{Kt}\,.
    \end{array}
  \end{displaymath}
  We conclude applying the Gronwall Lemma, obtaining
  \begin{eqnarray*}
    \norma{(\rho_1-\rho_2)(t)}_{\L1}
    & \leq &
    \left(
      \norma{\rho_{o,1}-\rho_{o,2}}_{\L1}
      +
      t A_\eta(t) \norma{\eta_1-\eta_2}_{\W1\infty}
      +
      t A_{v}(t) \norma{v_1-v_2}_{\W1\infty} \right.
    \\
    & &
    \left.
      +
      t A_{\vec v}(t)
      \left(
        \norma{\vec v_1-\vec v_2}_{\L\infty}
        +
        \norma{\vec v_1-\vec v_2}_{\W11}
      \right)
    \right)
    \\
    & &
    \qquad \times
    \left(
      1
      +
      t \left(f(t) \alpha' + R e^{Kt} \alpha \right)
      e^{tf(t)\alpha' + t e^{Kt} R\alpha }
    \right) \,,
  \end{eqnarray*}
  and the stability estimate at~4.~follows.

  \textbf{5.} These properties follow through standard computations
  from the representation formula given
  in~\cite[Lemma~5.1]{ColomboHertyMercier}.

  \textbf{6.} As above, we deduce this further regularity property and
  the $\W21$ estimate from the representation formula
  in~\cite[Lemma~5.1]{ColomboHertyMercier}.

  \textbf{7.} Let $\rho\in \C0 \left(\reali^+; \L1(\reali^d; \reali^n)
  \right)$ be a solution to the initial
  problem~(\ref{eq:SCLn})--(\ref{eq:2}) such that for all $t\geq 0$,
  $\rho(t)\in \W2\infty\cap\W21(\reali^d; \reali^n)$. Consider now the
  linearized equations
  \begin{displaymath}
    \left\{
      \begin{array}{l}
        \partial_t \sigma^1
        +
        \div
        \left(
          \left(
            \rho^1 \; (v^1)'(\rho \convn \eta) \; \sigma \convn \eta
            +
            \sigma^1 \; v^1 (\rho \convn \eta)
          \right)
          \vec v^1(x)
        \right)
        =
        0
        \,,
        \\
        \ldots
        \\
        \partial_t \sigma^n
        +
        \div
        \left(
          \left(
            \rho^n \; (v^n)'(\rho\convn \eta ) \; \sigma\convn\eta
            +
            \sigma^n \; v^n (\rho\convn \eta)
          \right)
          \vec v^n(x)
        \right)
        =
        0\,,
      \end{array}
    \right.
  \end{displaymath}
  where $\rho = (\rho^1, \ldots, \rho^n)$, $\eta = (\eta^1, \ldots,
  \eta^n)$, $\sigma = (\sigma^1, \ldots \sigma^n)$ and $ \rho \convn
  \eta = \rho^1 \conv \eta^1 + \ldots + \rho^n \conv \eta^n$.  To
  prove the existence and uniqueness of weak entropy solutions to this
  linearized problem, we use the technique that proved to be effective
  for the initial value problem: let $M_1=\norma{\sigma_o}_{\L1}$, let
  $s_1, s_2\in \mathcal{X}_{M_1}$ are fixed functions; we fix the
  nonlocal term and study the Cauchy problems
  \begin{displaymath}
    \!\left\{
      \begin{array}{@{\,}l@{}}
        \partial_t \sigma_k^i
        +
        \div
        \left(
          \sigma^i \; v^i (\rho\convn \eta)  \vec v^i(x)
        \right)
        =
        -\div \left(
          \rho^i \; (v^i)'(\rho\convn \eta) \; s_k\convn\eta \;\vec v^i(x)
        \right)
        \\
        \sigma_{o,k}^i = \sigma_o^i
      \end{array}
    \right.
    \mbox{ for }
    \left\{
      \begin{array}{@{\,}rcl@{}}
        k & = & 1,2
        \\
        i & = & 1, \ldots, n
      \end{array}
    \right.
  \end{displaymath}
  We study the map $\mathcal{T} \colon \mathcal{X}_{M_1} \to
  \mathcal{X}_{M_1} $ defined by $\mathcal{T}(s) = \sigma$. This
  application is well defined thanks to Kru\v zkov
  Theorem~\cite{Kruzkov} and thanks
  to~\cite[Lemma~5.1]{ColomboHertyMercier}. Indeed, denoting
  \begin{displaymath}
    \begin{array}{rcl@{\qquad}rcl}
      f(t,x,u) & = &
      u \, v^i(\rho\convn \eta) \,  \vec v^i(x)\,,
      &
      F(t,x,u) & = &
      -\div \left(\rho^i \, (v^i)'(\rho\convn\eta) \, s_1\convn\eta  \vec v^i(x)\right)\,,
      \\
      g(t,x,u) & = &
      u \, v^i(\rho\convn \eta) \,  \vec v^i(x)\,,
      &
      G(t,x,u)
      & = &
      -\div \left(\rho^i \, (v^i)'(\rho\convn\eta) \, s_2\convn\eta \vec v^i(x) \right)\,,
    \end{array}
  \end{displaymath}
  we have $\pt_u f, \pt_u g\in \L\infty([0,T]\times\reali^d\times
  [-U,U])$, $F-\div f, G - \div g \in
  \L\infty([0,T]\times\reali^d\times [-U,U])$, $\pt_u (F-\div f),
  \pt_u (G-\div g)\in \L\infty([0,T]\times\reali^d\times
  [-U,U])$. Hence Kru\v zkov hypotheses are satisfied. To
  apply~\cite[Theorem~2.6]{Lecureux}, we have now to check that
  \begin{displaymath}
    \begin{array}{rcl}
      \nabla \pt_u f  \in \L\infty([0,T]\times\reali^d\times [-U,U])\,,
      \\
      \int_0^T \int_{\reali^d}
      \norma{\nabla(F-\div f)}_{\L\infty([-U, U])} \d{x} \d{t}
      & < &
      +\infty\,,
      \\
      \int_0^T\int_{\reali^d}
      \norma{F-G-\div (f-g)}_{\L\infty([-U,U])} \d{x} \d{t}
      & < &
      +\infty\,.
    \end{array}
  \end{displaymath}
  We have
  \begin{eqnarray*}
    \norma{\nabla F(t,\cdot,u)}_{\L1}
    &\leq &
    \norma{\rho(t)}_{\W2\infty} \norma{v'}_{\W1\infty} \norma{s_1}_{\L1}
    \norma{\eta}_{\W2\infty} \norma{\vec v}_{\W21}
    \\
    & &
    \times
    \Bigl[
    9
    +
    6 \norma{\rho(t)}_{\L1} \norma{\nabla \eta}_{\L\infty}
    +
    \norma{\rho(t)}_{\L1}^2 \norma{\nabla \eta}_{\L\infty}^2
    \\
    & &
    \qquad\qquad
    +
    \norma{\rho(t)}_{\L1} \norma{\nabla^2 \eta}_{\L\infty}
    \Bigr] ,
    \\
    \norma{\nabla\div f(t, \cdot, u)}_{\L1}
    & \leq &
    \modulo{u} \, K_2\,,
    \\
    \norma{(F-G)(t,\cdot,u)}_{\L1}
    &\leq &
    \norma{\rho(t)}_{\W1\infty} \norma{v'}_{\W1\infty}
    \norma{\eta}_{\W1\infty} \norma{\vec v}_{\W11}
    \left(
      3
      +
      \norma{\rho(t)}_{\L1} \norma{\nabla\eta}_{\L\infty}
    \right)
    \\
    & &
    \times \norma{(s_1-s_2)(t)}_{\L1}\,,
    \\
    \norma{\div\!(f-g)(t,\cdot,u)}_{\L1} & = &0\,,
    \\
    \norma{\pt_u(f-g)(t,\cdot,u)}_{\L\infty} & = &0\,.
  \end{eqnarray*}
  where $K_2$ is defined as in~(\ref{eq:K2}). Using the estimate on
  $\norma{\rho(t)}_{\W1\infty}$ obtained in~4.~and denoting
  \begin{equation}
    \label{eq:K4}
    K_4
    =
    \norma{\rho_o}_{\W1\infty} \norma{v'}_{\W1\infty}
    \norma{\eta}_{\W1\infty} \norma{\vec v}_{\W11}
    \left(
      3
      +
      \norma{\rho_o}_{\L1} \norma{\nabla\eta}_{\L\infty}
    \right)\,,
  \end{equation}
  we obtain
  \begin{eqnarray*}
    \norma{\sigma_1^i (t) - \sigma^i_2(t)}_{\L1}
    & \leq &
    \int_0^t
    \norma{\div \left(
        \rho^i \, (v^i)'(\rho \convn \eta) \, (s_1-s_2) \convn \eta
        \,  \vec v^I
      \right)}_{\L1}
    \d{t}
    \\
    & \leq &
    t \, K_4 (1+Ct)e^{Ct} \, \norma{s_1-s_2}_{\L\infty([0,t], \L1)} \,.
  \end{eqnarray*}
  For $T$ small enough, we
  obtain that $\mathcal{T}$ is a contraction, proving the local in
  time existence and uniqueness of solutions by Banach Fixed Point
  Theorem. We then extend to $+\infty$ the time of existence by
  iteration of the process.

  Denote now by $\rho$, respectively $\rho_h$, the solution to the
  initial problem~\eqref{eq:SCLn}--\eqref{eq:2} with initial
  conditions $\rho_o$, respectively $\rho_o + h \,
  \sigma_o$. Moreover, call $\sigma$ the solution to the linearized
  equation~\eqref{eq:linear} with initial condition $\sigma_o$ and
  define $z_h = \rho + h \, \sigma$. If $\sigma $ is smooth enough we
  can write for $z_h$ the equation
  \begin{displaymath}
    \partial_t z^i_h
    +
    \div \left(
      z^i_h \left(
        v^i(\rho\convn \eta)
        +
        h (v^i)'(\rho\convn \eta) \, \sigma\convn\eta
      \right)
      \vec v^i(x)
    \right)
    =
    h^2 \div
    \left(
      \sigma^i\, (v^i)'(\rho\convn \eta) \, \sigma\convn\eta \vec v^i(x)
    \right)\,.
  \end{displaymath}
  We want now to estimate $\norma{(\rho_h-z_h)(t)}_{\L1}/h$. In order
  to do that, we use~\cite[Theorem~2.6]{Lecureux}.  As it is similar
  to the estimate in the proof of~\cite[Theorem~2.10]{ColomboHertyMercier}, we omit it.
\end{proofof}

\subsection{Proofs related to Section~\ref{sec:2}}
\label{subsec:TD2}

\begin{lemma}\label{lem:inv0R}
  In the same setting as Lemma~\ref{lem:scalar}, if furthermore for a
  given $R>0$, we have $q(R)=0$, then
  \begin{displaymath}
    \rho_o\in[0,R] \qquad \Rightarrow \qquad \forall t\geq 0\,,\quad
    \rho(t)\in [0,R]\,.
  \end{displaymath}
\end{lemma}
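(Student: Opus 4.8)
The plan is to sandwich the solution between the two stationary solutions $0$ and $R$. The lower bound $\rho(t) \geq 0$ requires no new work: it is exactly the maximum principle already established inside the proof of Lemma~\ref{lem:scalar}, where $\rho \equiv 0$ is a solution to~\eqref{eq:scalar} because $q(0) = 0$, and Kru\v zkov's comparison~\cite[Theorem~3]{Kruzkov} applies since $\rho_o \geq 0$. It remains to prove $\rho(t) \leq R$.

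The key observation is that the extra hypothesis $q(R) = 0$ makes the constant map $\bar\rho(t,x) \equiv R$ a classical --- hence weak entropy --- solution of $\partial_t \rho + \div\left(q(\rho)\,\direct(t,x)\right) = 0$: the associated flux $q(R)\,\direct(t,x)$ vanishes identically, so that $\partial_t \bar\rho + \div\left(q(\bar\rho)\,\direct\right) = 0$, and the Kru\v zkov entropy inequalities for $\bar\rho$ hold with equality, every term carrying the factor $q(R) = 0$. Since by the lower bound just recalled the values of $\rho$ remain in $\reali^+$, where $q$ is of class $\C2$, both $\rho$ and $\bar\rho$ are bounded entropy solutions of the conservation law with the very same flux $f(t,x,u) = q(u)\,\direct(t,x)$. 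Comparing them through Kru\v zkov's comparison principle~\cite[Theorem~3]{Kruzkov} and using $\rho_o(x) \leq R = \bar\rho(0,x)$ for a.e.~$x \in \reali^d$ then yields $\rho(t,x) \leq R$ for a.e.~$x$ and every $t \geq 0$, which together with the lower bound gives $\rho(t)\in[0,R]$.

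The one step that needs care --- and the only genuine obstacle --- is that $\bar\rho \equiv R$ does not belong to $\L1(\reali^d;\reali)$, so it lies outside the well posedness framework of Lemma~\ref{lem:scalar} and the comparison cannot be quoted directly in its $\L1$-contraction form. This is, however, only a technical nuisance: Kru\v zkov's comparison is a local statement, valid for bounded entropy solutions on cones of dependence thanks to finite speed of propagation. If one prefers to remain strictly within the $\L1$ theory, one can instead compare $\rho$ with the solution emanating from the admissible datum $R\,\caratt{B(0,r)}$ (or a smooth truncation of it): on any fixed compact set and for $t$ in a fixed short time interval this solution coincides with $\bar\rho$ while $\rho$ is unaffected, so the bound $\rho \leq R$ propagates there; letting $r \to +\infty$ and then iterating in time extends it to all of $\reali^d \times \reali^+$. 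Everything else is the routine remark that flux-free constants are stationary entropy solutions.
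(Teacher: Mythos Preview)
Your argument is correct and follows essentially the same route as the paper: observe that $q(0)=q(R)=0$ makes both constants $0$ and $R$ entropy solutions of~\eqref{eq:scalar}, and then invoke Kru\v zkov's maximum principle to sandwich $\rho(t)$ between them. The paper's proof is two lines and does not pause over the $\L1$ issue you raise about $\bar\rho\equiv R$; your localization remark is a reasonable bit of extra care, but it is not part of the paper's argument.
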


\begin{proof}
  Since $q(0)=q(R)=0$, $\rho\equiv 0$ and $\rho\equiv R$ are solutions
  to~(\ref{eq:scalar}). The maximum principle of
  Kru\v{z}kov~\cite{Kruzkov} then ensures that $0\leq \bar\rho \leq R $
  implies $0 \leq \rho(t) \leq R$ for all $t \geq 0$.
\end{proof}

\begin{proofof}{Theorem~\ref{thm:main2}}
  Consider the following steps separately.

  \paragraph{Existence.}
  Fix an arbitrary positive time $T$, whose precise value will be
  chosen later. Let $(r^1, \ldots , r^n), (s^1, \ldots, s^n) \in \C0
  \left([0,T]; \L1(\reali^d; [0,R]^n)\right)$. For any $i\in \{1,
  \ldots, n\}$, we define
  \begin{displaymath}
    \direct^i(t,x)
    =
    { \vec v^i}(x)
    +
    \mathcal{I}^i \left( r^1(t), \ldots, r^n(t)\right)(x)\,,
    \qquad
    W^i(t,x)
    =
    { \vec v^i}(x)
    +
    \mathcal{I}^i \left(s^1(t), \ldots, s^n(t) \right)(x)\,.
  \end{displaymath}
  As $v$ satisfies \textbf{($\mathbf{v}$)} and $\mathcal{I}$ satisfies
  $\mathbf{(I)}$, for any $r,s\in \C0\left([0,T], \L1(\reali^d, [0,R]^n)\right)$,
  we have for any $t\geq 0$, $\direct(t), W(t)\in
  (\C2\cap\W1\infty)(\reali^d, \reali^d)$ and $\div \direct(t), \div
  W(t)\in \W11(\reali^d, \reali)$.  Then, according to
  Lemma~\ref{lem:scalar}, there exists $\rho, \sigma \in
  \C0\left([0,T]; \L1(\reali^d; \reali_+^n)\right)$, that are the weak
  entropy solutions to the systems of decoupled equations
  \begin{displaymath}
    \left\{
      \begin{array}{l}
        \partial_t \rho^1+ \div
        \left( \rho^1 v^1(\rho^1) \, \direct^1 (t,x)\right) = 0\,,
        \\
        \ldots
        \\
        \partial_t \rho^n+ \div
        \left(\rho^n v^n(\rho^n) \, \direct^n (t,x)\right) = 0\,,
      \end{array}
    \right.
    \qquad
    \left\{
      \begin{array}{l}
        \partial_t \sigma^1+ \div
        \left(\sigma^1 v^1(\sigma^1) \, W^1(t,x)\right)=0\,,
        \\
        \ldots
        \\
        \partial_t \sigma^n+ \div
        \left(\sigma^n v^n(\sigma^n) \, W^n(t,x)\right)=0\,,
      \end{array}
    \right.
  \end{displaymath}
  with initial condition $\rho(0) = \sigma(0) = \rho_o \in
  (\L1\cap\L\infty)(\reali^d; [0,R]^n)$.

Note furthermore that $q^i(R)=0$;  thus, thanks to  Lemma \ref{lem:inv0R}, that  if $\rho$ is a solution of the above equation, then $\rho_o\in [0,R]$ implies for all $t\geq 0$, $\rho(t)\in [0,R]$. Hence, we have invariance of the interval $[0,R]$.

  Consider the map
  \begin{displaymath}
    \mathcal{T} \colon \left\{
      \begin{array}{ccc}
        \C0 \left([0,T]; \L1(\reali^d; [0,R]^n)\right)
        & \longrightarrow &
        \C0 \left([0,T]; \L1(\reali^d; [0,R]^n)\right)
        \\
        r
        &\mapsto
        &\rho
      \end{array}\right\}\,.
  \end{displaymath}
  Use Lemma~\ref{lem:scalar} and define $\kappa_o = \max_i
  \left\{(2d+1) \norma{(q^i)'}_{\L\infty([0,R])} \norma{\nabla
      \direct^i}_{\L\infty([0,T]\times\reali^d)}\right\}$, we get:
  \begin{eqnarray*}
    \norma{\rho^i(t)-\sigma^i(t)}_{\L1}
    & \leq &
    t\, e^{\kappa_o t} \norma{(q^i)'}_{\L\infty([0,R])}
    \norma{ \direct^i-W^i}_{\L\infty([0,t]\times \reali^d)}
    \\
    & &
    \qquad
    \times
    \left[
      \tv(\rho^i_o)
      +
      C_d \, \norma{q^i}_{\L\infty([0,R])} \int_0^t\int_{\reali^d}
      \norma{\nabla\div  \direct^i (\tau, x) } \d{x} \d{\tau}
    \right]
    \\
    & &
    +
    \norma{q^i}_{\L\infty([0,R])} \int_0^t \int_{\reali^d}
    \modulo{\div \left( \direct^i(\tau, x)-W^i(\tau, x) \right)} \d{x} \d{\tau}\,.
  \end{eqnarray*}
  Using hypothesis \textbf{(I)}, we obtain
  \begin{eqnarray*}
    & &
    \norma{\rho^i(t)-\sigma^i(t)}_{\L1}
    \\
    &\leq &
    t\, C_I \, \norma{r-s}_{\L\infty([0,t]; \L1( \reali^d; \reali^n))}
    \\
    & &
    \times
    \Big[
    t \, e^{\kappa_o t} \norma{(q^i)'}_{\L\infty([0,R])}
    \, C_d \, \norma{q^i}_{\L\infty([0,R])}
    \left(
      \norma{\nabla \div { \vec v^i}(x)}_{\L1}
      +
      C_I \, \norma{r}_{\L\infty([0,t], \L1(\reali^d; \reali^n))}
    \right)
    \\
    & &
    +
    e^{\kappa_o t} \, \norma{(q^i)'}_{\L\infty([0,R])} \tv(\rho^i_o)
    +
    \norma{q^i}_{\L\infty([0,R])}
    \Big] \,.
  \end{eqnarray*}
  Hence, for $T$ small enough, we can apply the Banach Fixed Point
  Theorem.

  Now, using the total variation estimate~\eqref{eq:bv}, a standard
  iteration procedure allows to obtain global in time existence.

  \paragraph{Total Variation Estimate.}
  To prove the estimate on the total variation, apply
  Lemma~\ref{lem:scalar} and use a procedure entirely similar to that
  exploited in the proof of Theorem~\ref{thm:main1}.

  \paragraph{Stability.}
  We use~\cite[Proposition~2.10]{Lecureux} to obtain
  \begin{eqnarray*}
    & &
    \norma{\rho_1(t) - \rho_2(t)}_{\L1}
    \\
    &\leq &
    \norma{\rho_{o,1}-\rho_{o,2}}_{\L1}
    +
    e^{\kappa_o t}
    \left(
      \tv(\rho_{o,1})
      +
      t \,C_d \, \norma{q_1}_{\L\infty} (C_I+\norma{\nabla\div \vec v_1}_{\L1} )
    \right)
    \\
    & &
    \qquad
    \times
    \Bigl(
    \norma{q_1'}_{\L\infty([0,R])} \int_0^t
    \norma{
      \mathcal{I} \left(\rho_1(\tau) \right)
      -
      \mathcal{I} \left(\rho_2(\tau) \right)
    }_{\L\infty(\reali^d)}
    \d{\tau}
    \\
    & &
    \qquad\qquad
    +
    t \norma{q_1'}_{\L\infty([0,R])} \norma{\vec v_2-\vec v_1}_{\L\infty}
    +
    t(C_I+\norma{\vec v_2}_{\L\infty}) \norma{q_1'-q_2'}_{\L\infty([0,R])}
    \Bigr)
    \\
    & &
    +
    \norma{q_1}_{\L\infty([0,R])} \int_0^t  \norma{
      \div\mathcal{I} \left(\rho_1(\tau) \right)
      -
      \div\mathcal{I} \left(\rho_2(\tau) \right)}_{\L1(\reali^d)}
    \d{\tau}
    \\
    & &
    +
    t \norma{q_1}_{\L\infty([0,R])} \norma{\div \vec v_2-\div \vec v_1}_{\L1}
    +
    t (C_I+\norma{\div\vec v_2}_{\L1})\norma{q_1-q_2}_{\L\infty([0,R])}
    \\
    &\leq &
    \norma{\rho_{o,1}-\rho_{o,2}}_{\L1}
    \\
    & &
    +
    t
    \Bigl[
    (C_I+\norma{\vec v_2}_{\L\infty})  e^{\kappa_o t}
    \\
    & &
    \qquad\qquad
    \times
    \left(
      \tv(\rho_{o,1})
      +
      t C_d \,  \norma{q_1}_{\L\infty([0,R])}
      (C_I+\norma{\nabla\div \vec v_1}_{\L1})
    \right)
    \norma{q_1'-q_2'}_{\L\infty([0,R])}
    \\
    & &
    \qquad\quad
    +
    (C_I+\norma{\div\vec v_2}_{\L1}) \norma{q_1-q_2}_{\L\infty([0,R])}
    \Bigr]
    \\
    & &
    +
    t \Bigl[
    e^{\kappa_o t}\norma{q_1'}_{\L\infty([0,R])}
    \left(
      \tv(\rho_{o,1})
      +
      t \, C_d \norma{q_1}_{\L\infty([0,R])} (C_I+\norma{\nabla\div \vec v_1}_{\L1} )
    \right)
    \norma{\vec v_2-\vec v_1}_{\L\infty}
    \\
    & &
    \qquad\quad
    +
    \norma{q_1}_{\L\infty([0,R])} \norma{\div(\vec v_2-\vec v_1)}_{\L1}
    \Bigr]
    \\
    & &
    +
    C_I \Bigl[
    e^{\kappa_o t}  \norma{q_1'}_{\L\infty([0,R])}
    \left(
      \tv(\rho_{o,1})
      +
      t C_d \norma{q_1}_{\L\infty([0,R])} (C_I+\norma{\nabla\div\vec v_1}_{\L1})
    \right)
    \\
    & &
    \qquad\quad
    +
    \norma{q_1}_{\L\infty([0,R])}
    \Bigr]
    \\
    & &
    \qquad
    \times
    \int_0^t \norma{\rho_1(\tau)-\rho_2(\tau)}_{\L1} \, \d{\tau}\,.
  \end{eqnarray*}
  Let us denote
  \begin{eqnarray*}
    a(t)
    & = &
    t \Bigl[
    (C_I+\norma{\vec v_2}_{\L\infty})  e^{\kappa_o t}
    \left(
      \tv(\rho_{o,1}) + t \, C_d  \norma{q_1}_{\L\infty([0,R])} \, (C_I+\norma{\nabla\div \vec v_1}_{\L1})
    \right)
    \\
    & &
    \qquad
    \times
    \norma{q_1'-q_2'}_{\L\infty([0,R])}
    \\
    & &
    +
    (C_I+\norma{\div\vec v_2}_{\L1})
    \norma{q_1-q_2}_{\L\infty([0,R])}
    +
    \norma{q_1}_{\L\infty([0,R])} \norma{\div(\vec v_2-\vec v_1)}_{\L1}
    \\
    & &
    +
    e^{\kappa_o t} \norma{q_1'}_{\L\infty([0,R])}
    \left(
      \tv(\rho_{o,1})
      +
      t \,C_d  \norma{q_1}_{\L\infty([0,R])} \, (C_I+\norma{\nabla\div \vec v_1}_{\L1} )
    \right)
    \\
    & &
    \qquad
    \times
    \norma{\vec v_2-\vec v_1}_{\L\infty}
    \Bigr]\,,
    \\
    b(t)
    & = &
    C_I
    \bigl[
    e^{\kappa_o t}  \norma{q_1'}_{\L\infty([0,R])}
    \left(
      \tv(\rho_{o,1})
      +
      t \, C_d  \norma{q_1}_{\L\infty([0,R])} (C_I+\norma{\nabla\div\vec v_1}_{\L1})
    \right)
    \\
    & &
    \qquad
    +
    \norma{q_1}_{\L\infty([0,R])}
    \bigr]\,.
  \end{eqnarray*}
  so that, by integration, we get
  \begin{displaymath}
    \norma{\rho_1(t)-\rho_2(t)}_{\L1}
    \leq
    \left(1+t \, e^{t\,b(t)} \right)
    \left( \norma{\rho_{o,1}-\rho_{o,2}}_{\L1}  + a(t) \right)\,,
  \end{displaymath}
  which completes the proof.
\end{proofof}

\noindent\textbf{Acknowledgment:\quad} The second author was partially
supported by the GNAMPA~2011 project \emph{Non Standard Applications
  of Conservation Laws}.

\small{

  \bibliography{ColomboLecureux}

  \bibliographystyle{abbrv}

}
\end{document}